\documentclass{aims}

\usepackage{txfonts}
\usepackage{enumerate}
\usepackage{amssymb,amsmath,amsfonts,amsthm,mathrsfs}
\usepackage{graphicx,color}
\usepackage{cite}
\usepackage{indentfirst}
\numberwithin{equation}{section}
\def\typeofarticle{Research article}
\def\currentvolume{10}
\def\currentissue{1}
\def\currentyear{2025}

\def\ppages{858-883}
\def\DOI{10.3934/math.2025041}
\def\Received{03 October 2024}
\def\Revised{06 January 2025}
\def\Accepted{10 January 2025}
\def\Published{15 January 2025}
\newtheorem{defn}{Definition}[section]
\newtheorem{thm}{Theorem}[section]

\newtheorem{cor}{Corollary}[section]

\usepackage{cite}
\numberwithin{equation}{section}

\makeatletter
\renewcommand{\@biblabel}[1]{#1\hfill \hspace{-0.2cm}}
\makeatother

\begin{document}

\title{The Weighted $\boldsymbol{L}^{\boldsymbol{p}}$ estimates for the fractional Hardy operator and a class of integral operators on the Heisenberg group}

\author{%
  Tianyang He\affil{1},
  Zhiwen Liu\affil{2},
  and
  Ting Yu\affil{1,}\corrauth
}

% \shortauthors is used in copyright information in the end of the paper
\shortauthors{the Author(s)}

\address{%
  \addr{\affilnum{1}}{Research Center for Mathematics and Interdisciplinary Sciences, Frontiers Science Center for Nonlinear Expectations (Ministry of Education), Shandong University, Qingdao, 266237, China}
  \addr{\affilnum{2}}{School of Science, Shandong Jianzhu University, Jinan, 250100, China}}

% corresponding author
\corraddr{Email: tingy@sdu.edu.cn.}

\begin{abstract}
In the setting of a Heisenberg group, we first studied the sharp weak estimate for the $n$-dimensional fractional Hardy operator from $L^p$ to $L^{q,\infty}$. Next, we studied the sharp bounds for the $m$-linear $n$-dimensional integral operator with a kernel on weighted Lebesgue spaces. As an application, the sharp bounds for Hardy, Hardy-Littlewood-P\'{o}lya, and Hilbert operators on weighted Lebesgue spaces were obtained. Finally, according to the previous steps, we also found the estimate for the Hausdorff operator on weighted $L^p$ spaces.
\end{abstract}

\keywords{
{fractional Hardy operator; $m$-linear $n$-dimensional integral operator with a kernel; weighted Lebesgue space; Hardy-Littlewood-P\'{o}lya operator; Hilbert operator; sharp bound for the integral operator}
\newline
\textbf{Mathematics Subject Classification:} Primary 42B25; Secondary 42B20, 47H60, 47B47}

\maketitle

\section{Introduction}

It is well-known that averaging operators play an important role in harmonic
analysis. It is often desirable to obtain sharp norm estimates for them in
different function spaces. Our starting point is the Hardy operator and its duality
form:
$$Hf(x)=\frac{1}{x}\int_0^x{f(t)\mathrm{d}t,\quad H^*f(x)}=\int_x^{\infty}{\frac{f(t)}{t}\mathrm{d}t,}
$$
where $x>0$. Hardy \cite{Hardy} established the well-known Hardy integral inequalities
$$
\int_0^{\infty}{|Hf(x)|^p\mathrm{d}x}\leqslant \bigl( \frac{p}{p-1} \bigr) ^p\int_0^{\infty}{|f(x)|^p\mathrm{d}x,\quad p}>1,
$$
and
$$
\int_0^{\infty}{|H^*f(x)|^{p^{\prime}}\mathrm{d}x}\leqslant \bigl( \frac{p}{p-1} \bigr) ^{p^{\prime}}\int_0^{\infty}{|f(x)|^{p^{\prime}}\mathrm{d}x,\quad p}>1,
$$
where $p^{\prime}=p/(p-1)$. He proved that the constant $p/(p-1)$ is sharp. The
corresponding higher-dimensional Hardy operator was introduced by Faris \cite{BI}
in his study of quantum mechanics. Christ and Grafakos \cite{CH} gave the following
equivalent definition of $n$-dimensional Hardy operators:
$$
\mathscr{H} _nf(x)=\frac{1}{|x|^n}\int_{|y|<|x|}{f(y)\mathrm{d}y,}
$$
where $x\in \mathbb{R} ^n\setminus \{\theta \}$, and $\theta$ is the origin in $\mathbb{R}^n$. They showed that
$$
\parallel \mathscr{H} _n\parallel _{L^p(\mathbb{R} ^n)\rightarrow L^p(\mathbb{R} ^n)}=\frac{p}{p-1}\frac{\omega _n}{n},\quad 1<p<\infty .
$$
Here $\omega _n$ is the superficial area of the unit ball in $\mathbb{R}^n$. The Lebesgue spaces with power weights are another kind of function space to consider the sharp estimates
of the Hardy operator. The method in \cite{CH} is invalid in this case. Fu et al. \cite{Hardy1}, by the method of rotation, established the following estimate:
$$
\parallel \mathscr{H} _n\parallel _{L_{|x|^{\beta}}^{p}(\mathbb{R} ^n)\rightarrow L_{|x|^{\beta}}^{p}(\mathbb{R} ^n)}=\frac{\omega _n}{\frac{n}{p^{\prime}}-\frac{\beta}{p}}.
$$
For more information about the Hardy operator, we refer to the reader to \cite{LU}.

Meanwhile, the fractional Hardy operator is also very interesting since it is a useful tool to study the embedding properties of function spaces. Mizuta et al. \cite{fractional} showed that the optimal bound of the fractional Hardy operator implies the sharp embedding properties of function spaces. There is much literature on function spaces. Let $f$ be a locally integrable function on $\mathbb{R} ^n$. Then the $n$-dimensional fractional Hardy operator and its duality form are
$$H_{\alpha}f\left( x \right) =\frac{1}{\left| x \right|^{n-\alpha}}\int_{\left| y \right|<\left| x \right|}{f\left( y \right)}dy,\quad H_{\alpha}^{*}f\left( x \right) =\int_{\left| y \right|>\left| x \right|}{\frac{1}{\left| y \right|^{n-\alpha}}f\left( y \right)}dy,$$
where $0<\alpha<n$, $x\in \mathbb{R} ^n\backslash\left\{ 0\right\}$. If $\alpha$ = 0, the fractional Hardy operator is the classic Hardy operator. There is much literature on the boundness of these operators \cite{Hardyjieshao1,Hardyjieshao2,Hardyjieshao3,Hardyjieshao4}. Among them, Lu et al. \cite{Hardyjieshao2} obtained the following estimates. Suppose
$$0<\alpha <n,\quad1<p\leqslant \frac{n}{\alpha},\quad\frac{1}{p}-\frac{1}{q}=\frac{\alpha}{n}.$$
Then
$$\left\| H_{\alpha}f \right\| _{L^q\left( \mathbb{R} ^n \right)}\leqslant C\left\| f \right\| _{L^p\left( \mathbb{R} ^n \right)},$$
where
$$\left( \frac{p}{q} \right) ^{1/q}\left( \frac{p}{p-1} \right) ^{1/q}\left( \frac{q}{q-1} \right) ^{1-1/q}\left( 1-\frac{p}{q} \right) ^{1/p-1/q}\left( \frac{\omega _n}{n} \right) ^{1-\alpha /n}\leqslant C\leqslant \left( \frac{p}{p-1} \right) ^{p/q}\left( \frac{\omega _n}{n} \right) ^{1-\alpha /n}.
$$
If $p=1$, then
$$\left\| H_{\alpha} \right\| _{L^1\left( \mathbb{R} ^n \right) \rightarrow L^{n/\left( n-\alpha \right) ,\infty}\left( \mathbb{R} ^n \right)}=\left( \frac{\omega _n}{n} \right) ^{1-\frac{\alpha}{n}}.$$
The optimal $L^p\rightarrow L^q$ estimate was later obtained in \cite{Hardyjieshao3}:
$$\left\| H_{\alpha} \right\| _{L^p\left( \mathbb{R} ^n \right) \rightarrow L^q\left( \mathbb{R} ^n \right)}=\left( \frac{p\prime}{q} \right) ^{1/q}\left( \frac{n}{q\alpha}\cdot B\left( \frac{n}{q\alpha},\frac{n}{q^{\prime}\alpha} \right) \right) ^{-\alpha /n}\left( \frac{\omega _n}{n} \right) ^{1-\frac{\alpha}{n}},$$
where 
$$p^{\prime}=\frac{p}{p-1},\quad q^{\prime}=\frac{q}{q-1},$$
and $B\left( \cdot ,\cdot \right)$ is the beta function defined by
$$B\left( z,\omega \right) =\int_0^1{t^{z-1}\left( 1-t \right) ^{\omega -1}dt},$$
where $z$ and $\omega$ are complex numbers with positive real parts. Comparing with
the complicated bounds in the power weighted spaces, the sharp weak bounds for $H_{\alpha}$ and $H_{\alpha}^{*}$ seem easier to understand. Gao and Zhao \cite{Hardyjieshao4} set up
$$
\left\| H_{\alpha}^{*} \right\| _{L^1(\mathbb{R} ^n)\rightarrow L^{n/(n-\alpha ),\infty}(\mathbb{R} ^n)}=\left( \frac{\omega _n}{n} \right) ^{1-\frac{\alpha}{n}},\quad\parallel H_{\alpha}^{*}\parallel _{L^p(\mathbb{R} ^n)\rightarrow L^{q,\infty}(\mathbb{R} ^n)}\,=\left( \frac{\omega _n}{n} \right) ^{\frac{1}{q}+\frac{1}{p^{\prime}}}\bigl( \frac{q}{p^{\prime}} \bigr) ^{1/p^{\prime}}.
$$

It is then a nature problem to obtain the operator norms of $H_\alpha$ and its dual operator $H_\alpha^*$ in corresponding power weighted spaces. For the latter, Gao et al. \cite{GA} set up
$$
\left\|H_\alpha^*\right\|_{L_{|x| \rho}^1\left(\mathbb{R}^n\right) \rightarrow L_{|x|^\beta}^{(n+\beta) /(n-\alpha+\rho), \infty}\left(\mathbb{R}^n\right)}=\left(\frac{\omega_n}{n+\beta}\right)^{(n-\alpha+\rho) /(n+\beta)},
$$
and
$$
\left\|H_\alpha^*\right\|_{L_{|x|}^p \rho}\left(\mathbb{R}^n\right) \rightarrow L_{|x| \beta}^{q, \infty}\left(\mathbb{R}^n\right)=\left(\frac{\omega_n}{n+\beta}\right)^{\frac{1}{q}+\frac{1}{p^{\prime}}}\left(\frac{q}{p^{\prime}}\right)^{1 / p^{\prime}}.
$$
Then Yu et al. \cite{fraction} set up
$$
\parallel H_{\alpha}\parallel _{L_{|x|^{\rho}}^{p}(\mathbb{R} ^n)\rightarrow L_{|x|^{\beta}}^{q,\infty}(\mathbb{R} ^n)}\,=\bigl( \frac{\omega _n}{n+\beta} \bigr) ^{1/q}\biggl( \frac{\omega _n}{n-\frac{\rho}{p-1}} \biggr) ^{1/p^{\prime}},
$$
and
$$
\parallel H_{\alpha}\parallel _{L^1(\mathbb{R} ^n)\rightarrow L_{|x|^{\beta}}^{(n+\beta )/(n-\alpha ),\infty}(\mathbb{R} ^n)}\,=\bigl( \frac{\omega _n}{n+\beta} \bigr) ^{(n-\alpha )/(n+\beta )}.
$$

Inspired by them, we will study the sharp weak bound for the fractional Hardy operator on the Heisenberg group, which plays an important role in several branches of mathematics. Now, allow us to introduce some basic knowledge about the Heisenberg group which will be used later.

The Heisenberg group is a very typical non-commutative group, and research on up-modulation and analytic problems is an extension of Euclidean space upharmonic and analytical problems, which is an important part of non-commutative harmonic analysis \cite{cou, tha}. Harmonic analysis on the Heisenberg group has been drawing more and more attention, see\cite{chu} and \cite{zha}.

Let us introduce some basic knowledge about the Heisenberg group. The Heisenberg group $\mathbb{H}^n$ is a non-commutative nilpotent Lie group, with the underlying manifold $\mathbb{R}^{2n}\times\mathbb{R}$ and the group law.

Let
$$
x=(x_1,\dots,x_{2n},x_{2n+1}),\quad y=(y_1,\dots,y_{2n},y_{2n+1}),
$$
and then
$$
x\times y=(x_1+y_1,\dots,x_{2n}+y_{2n},x_{2n+1}+y_{2n+1}+2\sum_{j=1}^n(y_jx_{n+j}-x_jy_{n+j}).
$$

The Heisenberg group $\mathbb{H}^n$ is a homogeneous group with dilations
$$
\delta_r(x_1,x_2,\dots,x_{2n},x_{2n+1})=(rx_1,rx_2,\dots,rx_{2n},r^2x_{2n+1}),\quad  r>0.
$$

The Haar measure on $\mathbb{H}^n$ coincides with the usual Lebesgue measure on $\mathbb{R}^{2n+1}$. We denote any measurable set $E\subset \mathbb{H}^n$ by $\left|E\right|$, and then
$$
\left| \delta_r \left(E\right) \right|=r^Q\left|E\right|,\quad d(\delta_rx)=r^Qdx,
$$
where $Q=2n+2$ is called the homogeneous dimension of $\mathbb{H}^n$.

The Heisenberg distance derived from the norm
$$
| x |_h=\left[ \left( \sum_{i=1}^{2n}{x_{i}^{2}} \right) ^2+x_{2n+1}^{2} \right]^{{{1}/{4}}},
$$
where $x=(x_1,x_2,\dots,x_{2n},x_{2n+1})$, is given by
$$
d(p,q)=d(q^{-1}p,0)=|q^{-1}p|_h.
$$
This distance $d$ is left-invariant, it meaning that $d(p,q)$ remains constant when both $p$ and $q$ are left shifted by some fixed vector on $\mathbb{H}^n$. Furthermore, $d$ satisfies the trigonometric inequality defined by \cite{kor}:
$$
d(p,q)\leq {d(p,x)+d(x,q)},\quad p,x,q\in \mathbb{H}^n.
$$
For $r>0$ and $x\in \mathbb{H}^n$, the ball and sphere with center $x$ and radius $r$ on $\mathbb{H}^n$ are given by
$$
B\left( x,r \right) =\left\{ y\in \mathbb{H}^n:d\left( x,y \right) <r \right\},
$$
and
$$
S\left( x,r \right) =\left\{ y\in \mathbb{H}^n:d\left( x,y \right) =r \right\} .
$$
Then we obtain
$$
|B(x,r)|=|B(0,r)|=\varOmega_Qr^Q,
$$
where
$$
\varOmega_Q=\frac{2\pi^{n+\frac{1}{2}}\varGamma \left({{n}/{2}} \right)}{(n+1) \varGamma
	(n) \varGamma((n+1)/2)}
$$
represents the volume of the unit sphere $B(0,1)$ on $\mathbb{H}^n$, and $\omega _Q=Q\varOmega _Q$. The reader is referred to \cite{tha,lunwenji2} for more details.

Except for the fractional Hardy operator, we also study the sharp bounds for some $m$-linear $n$-dimensional integral operators on the Heisenberg group. In 2017, Batbold and Sawano \cite{HLP} studied one-dimensional $m$-linear Hilbert-type operators that include the Hardy-Littlewood-P\'{o}lya operator on weighted Morrey spaces, and they obtained the sharp bounds. He et al. \cite{HLPtuiguang} extended the results in \cite{HLP} and obtained the sharp bound for the generalized Hardy-Littlewood-P\'{o}lya operator on weighted central and noncentral homogenous Morrey spaces. 
He set up
$$
\parallel T(f_1,\cdots ,f_m)\parallel _{L^{q,\lambda}(\mathbb{R} ^n,|x|^{\alpha},|x|^{\gamma})}\,\le C_m\prod_{j=1}^m{\parallel f_j}\parallel _{L^{q_j,\lambda}(\mathbb{R} ^n,|x|^{\alpha},|x|^{\frac{q_j\gamma _j}{q}})},
$$
where 
$$
C_m=\int_{\mathbb{R} ^{nm}}{K(y_1,}\cdots ,y_m)\prod_{i=1}^m{|y_i|^{-d(\lambda _i,q_i,\alpha ,\frac{q_i\gamma _i}{q})}dy_1}\cdots dy_m<\infty .
$$

In 2011, Wu and Fu \cite{p-adicHardy} got the sharp estimate of the $m$-linear $p$-adic Hardy operator on Lebesgue spaces with power weights. Zhang et al \cite{Hausdorff} obtained the sharp estimate for the $m$-linear $n$-dimensional Hausdorff operator on the weighted Morrey space.

Inspired by the above, on the Heisenberg group, we first study the sharp weak estimate for the $n$-dimensional fractional Hardy operator from $L^p$ to $L^{q,\infty}$. Second, we study a more general operator which includes Hardy, Hardy-Littlewood-P\'{o}lya, and Hilbert operators as a special case and consider their operator norm on weighted Lebesgue space. Finally, we also find the sharp bound for the Hausdorff operator on Lebesgue space, which generalizes the previous results.

To get the main conclusion, it is necessary to introduce some fundamental knowledge and
definitions. In the setting of the Heisenberg group, these operators and spaces are the fractional Hardy operator, $m$-linear $n$-dimensional Hardy operator, $m$-linear $n$-dimensional Hardy-Littlewood-P\'{o}lya operator, $m$-linear $n$-dimensional Hilbert operator, $m$-linear $n$-dimensional Hausdorff operator, and weighted $L^p$ and $L^{q,\infty}$.
\begin{defn}
	Let $f$ be nonnegative locally integrable functions on $\mathbb{H}^n$ and $Q=2n+1$, $0<\alpha <Q$. The $n$-dimensional fractional Hardy operator is defined by
	\begin{align}
		\mathcal{H} _{\alpha}f\left( x \right)= \frac{1}{|x|_{h}^{Q-\alpha}}\int_{|y|_h<|x|_h}{f\left( y \right) dy},
	\end{align}
	where $x\in \mathbb{H}^n\backslash\left\{ 0\right\}$.
\end{defn}
\begin{defn}
	Let $m$ be a positive integer and $f_1,\dots,f_m$ be nonnegative locally integrable functions on $\mathbb{H}^n$. The $m$-linear $n$-dimensional Hardy operator is defined by
	\begin{align}
		\mathcal{H} _{1}^{h}(f_1,...,f_m)(x)=\frac{1}{|x|_{h}^{mQ}}\int_{|(y_1,...,y_m)|_h\leqslant |x|_h}{f_1(y_1)\cdots f_m(y_m)dy_1\cdots dy_m},
	\end{align}
	where $x\in \mathbb{H}^n\backslash\left\{ 0\right\}$.
\end{defn}
\begin{defn}
	Let $m$ be a positive integer and $f_1,\dots,f_m$ be nonnegative locally integrable
	functions on $\mathbb{H}^n$. The $m$-linear $n$-dimensional Hardy-Littlewood-P\'{o}lya operator  is defined by
	\begin{align}
		\mathcal{H} _{2}^{h}(f_1,...,f_m)(x)=\int_{\mathbb{H} ^n}{\cdots}\int_{\mathbb{H} ^n}{\frac{f_1(y_1)\cdots f_m(y_m)}{[\max\mathrm{(}|x|_{h}^{Q},|y_1|_{h}^{Q},...,|y_m|_{h}^{Q})]^m}}dy_1\cdots dy_m,
	\end{align}
	where $x\in \mathbb{H}^n\backslash\left\{ 0\right\}$.
\end{defn}
\begin{defn}
	Let $m$ be a positive integer and $f_1,\dots,f_m$ be nonnegative locally integrable functions on $\mathbb{H}^n$. The $m$-linear $n$-dimensional Hilbert operator is defined by
	\begin{align}
		\mathcal{H} _{3}^{h}(f_1,...,f_m)(x)=\int_{\mathbb{H} ^n}{\cdots}\int_{\mathbb{H} ^n}{\frac{f_1(y_1)\cdots f_m(y_m)}{(|x|_{h}^{Q}+|y_1|_{h}^{Q}+\cdots +|y_m|_{h}^{Q})^m}}dy_1\cdots dy_m,
	\end{align}
	where $x\in \mathbb{H}^n\backslash\left\{ 0\right\}$.
\end{defn}
\begin{defn}
	Let $m$ be a positive integer, $f_1,\dots,f_m$ be nonnegative locally integrable functions on $\mathbb{H}^n$ and $\Phi$ be a nonnegative function on the Heisenberg group. The $m$-linear $n$-dimensional Hausdorff operator is defined by
	\begin{align}
		\mathcal{H} _{\Phi}^{h}(f_1,...,f_m)(x)=\int_{\mathbb{H} ^n}{\cdots}\int_{\mathbb{H} ^n}{\frac{\Phi (\delta _{\left| y_1 \right|_{h}^{-1}}x,...,\delta _{\left| y_m \right|_{h}^{-1}}x)}{|y_1|_{h}^{Q}\cdots |y_m|_{h}^{Q}}}f_1(y_1)\cdots f_m(y_m)dy_1\cdots dy_m,
	\end{align}
	where $x\in \mathbb{H}^n\backslash\left\{ 0\right\}$.
\end{defn}
\begin{defn}
	Let $1\leqslant p<\infty$. The Lebesgue space on the Heisenberg $L^p\left( \mathbb{H} ^n\right)$ is defined by
	$$L^p\left( \mathbb{H} ^n\right) =\{ f\in L_{loc}^{p}:\left\| f \right\| _{L^p\left( \mathbb{H} ^n \right)}<\infty \},$$
	where
	\begin{align}
		\left\| f \right\| _{L^p\left( \mathbb{H} ^n \right)}=\left( \int_{\mathbb{H} ^n}{\left| f\left( x \right) \right|^p dx} \right) ^{1/p}.
	\end{align}
\end{defn}
\begin{defn}
	Let $\omega : \mathbb{H} ^n\rightarrow \left( 0,\infty \right)$ be a positive measurable function, $1\leqslant p<\infty$. The weighted Lebesgue space on the Heisenberg $L^p\left( \mathbb{H} ^n,\omega \right)$ is defined by
	$$L^p\left( \mathbb{H} ^n,\omega \right) =\{ f\in L_{loc}^{p}:\left\| f \right\| _{L^p\left( \mathbb{H} ^n,\omega \right)}<\infty \},$$
	where
	\begin{align}
		\left\| f \right\| _{L^p\left( \mathbb{H} ^n,\omega \right)}=\left( \int_{\mathbb{H} ^n}{\left| f\left( x \right) \right|^p\omega \left( x \right) dx} \right) ^{1/p}.
	\end{align}
\end{defn}
\begin{defn}
	Let $\omega : \mathbb{H} ^n\rightarrow \left( 0,\infty \right)$ be a positive measurable function, $1\leqslant p<\infty$. The weighted weak Lebesgue space on the Heisenberg $L^p\left( \mathbb{H} ^n,\omega \right)$ is defined by
	$$L^{q,\infty}\left( \mathbb{H} ^n,\omega \right) =\left\{ f\in L_{loc}^{p}:\left\| f \right\| _{L^{q,\infty}\left( \mathbb{H} ^n,\omega \right)}<\infty \right\} ,$$
	where
	\begin{align}
		\left\| f \right\| _{L^{q,\infty}\left( \mathbb{H} ^n,\omega \right)}=\underset{\lambda >0}{\mathrm{sup}}\,\lambda \left( \int_{\mathbb{H} ^n}{\chi _{\left\{ x:f\left( x \right) >\lambda \right\}}\left( x \right) \omega \left( x \right) dx} \right) ^{1/q}.
	\end{align}
\end{defn}

Next, we will provide the main results of this article.

\section{Sharp estimate for the fractional Hardy operator}
In this section, we will study the weighted $L^p$ estimate for the fractional Hardy operator on the Heisenberg group. For the $n$-dimensional fractional Hardy operator, our results have a restricted condition: $\beta =0$ when $p=1$ and $\beta>0$ when $p>1$. Removing this restrictive condition requires a more complicated argument, and it will be presented in a future paper..
\begin{thm}\label{thm2.1}
	Let $1<p<\infty$, $1\leqslant q<\infty$, $\beta <Q\left( p-1 \right)$, $Q+\gamma >0$, and  $0\leqslant\alpha <\frac{\beta}{p-1}$.
	
	\noindent If
	$$\frac{\gamma +Q}{q}+\alpha =\frac{\beta +Q}{p},$$
	
	\noindent then
	\begin{align}
		\left\| \mathcal{H}_{\alpha} \right\| _{L^p( \mathbb{H} ^n,\left| x \right|_{h}^{\beta} ) \rightarrow L^{q,\infty}( \mathbb{H} ^n,\left| x \right|_{h}^{\gamma} )}=\left( \frac{\omega _Q}{Q+\gamma} \right) ^{\frac{1}{q}}\left( \frac{\omega _Q\left( p-1 \right)}{pQ-Q-\beta} \right) ^{\frac{1}{p^{\prime}}}.
	\end{align}
	
\end{thm}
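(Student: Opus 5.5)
The proof splits into an upper bound, obtained by Hölder's inequality followed by an exact computation of a level set, and a lower bound, obtained by testing on an extremal power function. Throughout I use polar coordinates on $\mathbb{H}^n$: for radial $g$,
\[
\int_{\mathbb{H}^n} g(|x|_h)\,dx=\omega_Q\int_0^\infty g(r)\,r^{Q-1}dr .
\]

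\textbf{Upper bound.} Fix $f\ge 0$ and $x\neq 0$, and put $r=|x|_h$. Write $f=f|y|_h^{\beta/p}\cdot|y|_h^{-\beta/p}$ and apply Hölder's inequality with exponents $p,p'$ over the ball $\{|y|_h<r\}$:
\[
\int_{|y|_h<r}f(y)\,dy\le \|f\|_{L^p(|x|_h^\beta)}\Bigl(\int_{|y|_h<r}|y|_h^{-\beta p'/p}dy\Bigr)^{1/p'} .
\]
Since $\beta p'/p=\beta/(p-1)$, the inner integral is
\[
\omega_Q\int_0^r s^{Q-1-\beta/(p-1)}ds=\frac{\omega_Q(p-1)}{pQ-Q-\beta}\,r^{(pQ-Q-\beta)/(p-1)} ,
\]
which converges because $\beta<Q(p-1)$. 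Hence
\[
\mathcal H_\alpha f(x)\le C_1\|f\|\,|x|_h^{\,Q/p'-\beta/p-Q+\alpha},\qquad C_1=\Bigl(\frac{\omega_Q(p-1)}{pQ-Q-\beta}\Bigr)^{1/p'} .
\]
Denote the exponent by $-\kappa$. Using $\frac1{p'}=1-\frac1p$, we get $\kappa=\frac{Q+\beta}{p}-\alpha=\frac{Q+\gamma}{q}$ by the scaling hypothesis, and $\kappa>0$ because $Q+\gamma>0$.

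The level set $\{\mathcal H_\alpha f>\lambda\}$ is therefore contained in the ball $\{|x|_h<(C_1\|f\|/\lambda)^{1/\kappa}\}$. Its $|x|_h^\gamma\,dx$-measure is at most $\frac{\omega_Q}{Q+\gamma}R^{Q+\gamma}$, where $R$ is that radius. Because $(Q+\gamma)/\kappa=q$, this gives
\[
\lambda\,\bigl|\{\mathcal H_\alpha f>\lambda\}\bigr|_{\gamma}^{1/q}\le \Bigl(\frac{\omega_Q}{Q+\gamma}\Bigr)^{1/q}C_1\|f\| ,
\]
which is the claimed upper bound.

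\textbf{Lower bound.} The natural extremal is the Hölder equality case on a fixed ball: $f_0(y)=|y|_h^{-\beta/(p-1)}\chi_{\{|y|_h<1\}}$. This lies in $L^p(|x|_h^\beta)$ since $\beta<Q(p-1)$, and
\[
\|f_0\|^p=\frac{\omega_Q(p-1)}{pQ-Q-\beta}=C_1^{p'} .
\]
For $|x|_h\ge 1$,
\[
\mathcal H_\alpha f_0(x)=C_1^{p'}|x|_h^{\alpha-Q} .
\]
I would compute $\lambda\,|\{\mathcal H_\alpha f_0>\lambda\}|_\gamma^{1/q}$ at the threshold $\lambda=C_1^{p'}$, i.e.\ the value at $|x|_h=1$, or more generally for $\lambda\le C_1^{p'}$. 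For $|x|_h<1$ one has
\[
\mathcal H_\alpha f_0(x)=C_1^{p'}|x|_h^{\alpha-Q+Q-\beta/(p-1)}=C_1^{p'}|x|_h^{\alpha-\beta/(p-1)} .
\]
This exceeds $C_1^{p'}$ because $\alpha<\beta/(p-1)$; this is exactly where that hypothesis enters. So the level set at $\lambda=C_1^{p'}$ (up to a null set) is the unit ball, of weighted measure $\frac{\omega_Q}{Q+\gamma}$. Taking $\lambda\uparrow C_1^{p'}$, the ratio becomes
\[
\frac{C_1^{p'}\bigl(\frac{\omega_Q}{Q+\gamma}\bigr)^{1/q}}{C_1^{p'/p}}=\Bigl(\frac{\omega_Q}{Q+\gamma}\Bigr)^{1/q}C_1^{p'(1-1/p)}=\Bigl(\frac{\omega_Q}{Q+\gamma}\Bigr)^{1/q}C_1 ,
\]
which matches the upper bound.

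\textbf{Main obstacle.} The delicate step is the lower bound: checking that the level set of $\mathcal H_\alpha f_0$ at the chosen height is exactly the unit ball. Inside the ball this uses $\alpha<\beta/(p-1)$, and outside it uses $\alpha<Q$ so that $\mathcal H_\alpha f_0$ decays there. If that monotonicity failed, I would instead use truncated power functions $|y|_h^{-\beta/(p-1)}\chi_{\{\varepsilon<|y|_h<1\}}$ and pass to the limit $\varepsilon\to0$.
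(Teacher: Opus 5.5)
Your proof is correct and follows the paper's argument. For the upper bound you use the same H\"older estimate, which gives the pointwise bound $C_1\|f\|\,|x|_h^{-(Q+\gamma)/q}$ and hence a ball-shaped level set; for sharpness you use the same extremal function $f_0=|y|_h^{-\beta/(p-1)}\chi_{\{|y|_h\le 1\}}$. The only difference is minor. The paper computes the full weak quasi-norm of $\mathcal{H}_\alpha f_0$, via the suprema $M_1$ over $\lambda<C_1^{p'}$ and $M_2$ over $\lambda\ge C_1^{p'}$. You evaluate only at the threshold height $\lambda=C_1^{p'}$, which is all a lower bound needs. In fact it only needs the level set to contain the unit ball, so the decay of $\mathcal{H}_\alpha f_0$ outside the ball, and hence $\alpha<Q$, is not actually required there.
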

\begin{thm}\label{thm2.2}
	Let $Q+\beta >0$ and $0<\alpha <Q$. Then
	\begin{align}\left\| \mathcal{H}_{\alpha} \right\| _{L^1\left( \mathbb{H} ^n \right) \rightarrow L^{\left( Q+\beta \right) /\left( Q-\alpha \right) ,\infty}( \mathbb{H} ^n,\left| x \right|_{h}^{\gamma} )}=\left( \frac{\omega _Q}{Q+\beta} \right) ^{\left( Q-\alpha \right) /\left( Q+\beta \right)}.
	\end{align}
\end{thm}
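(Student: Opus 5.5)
The plan is to prove matching upper and lower bounds, using polar coordinates on $\mathbb{H}^n$. I read the weight exponent $\gamma$ in the statement as $\beta$, since the target exponent $(Q+\beta)/(Q-\alpha)$ and the constant only make sense, by homogeneity, for the weight $|x|_h^{\beta}$. The basic tool is the polar decomposition on the Heisenberg group: for radial integrands, $dx=r^{Q-1}\,dr\,d\sigma$, where the unit sphere $S(0,1)$ has measure $\omega_Q=Q\varOmega_Q$. Consequently, for $R>0$,
$$\int_{|x|_h<R}|x|_h^{\beta}\,dx=\omega_Q\int_0^R r^{\beta+Q-1}\,dr=\frac{\omega_Q}{Q+\beta}R^{Q+\beta},$$
which is finite because $Q+\beta>0$. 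This single computation drives both directions.

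\emph{Upper bound.} For $f\in L^1(\mathbb{H}^n)$ and $x\neq 0$ we have the trivial pointwise bound $|\mathcal{H}_\alpha f(x)|\leqslant \|f\|_{L^1}|x|_h^{\alpha-Q}$. Since $0<\alpha<Q$, this gives, for every $\lambda>0$,
$$\{x:|\mathcal{H}_\alpha f(x)|>\lambda\}\subset\{x:|x|_h<(\|f\|_{L^1}/\lambda)^{1/(Q-\alpha)}\}.$$
By the ball computation, the $|x|_h^{\beta}\,dx$-measure of the right-hand set is $\frac{\omega_Q}{Q+\beta}(\|f\|_{L^1}/\lambda)^{(Q+\beta)/(Q-\alpha)}$. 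Raising this to the power $(Q-\alpha)/(Q+\beta)$ and multiplying by $\lambda$, the factor $\lambda$ cancels exactly. Taking the supremum over $\lambda$ yields
$$\|\mathcal{H}_\alpha f\|_{L^{(Q+\beta)/(Q-\alpha),\infty}(|x|_h^\beta)}\leqslant\Bigl(\frac{\omega_Q}{Q+\beta}\Bigr)^{(Q-\alpha)/(Q+\beta)}\|f\|_{L^1}.$$

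\emph{Lower bound.} I test on the normalized indicator $f_\varepsilon=\chi_{B(0,\varepsilon)}/|B(0,\varepsilon)|$, which satisfies $\|f_\varepsilon\|_{L^1}=1$. For $|x|_h>\varepsilon$ we get $\mathcal{H}_\alpha f_\varepsilon(x)=|x|_h^{\alpha-Q}$. Hence, for small $\lambda$ (namely $\lambda<\varepsilon^{\alpha-Q}$), the level set $\{\mathcal{H}_\alpha f_\varepsilon>\lambda\}$ contains the annulus $\varepsilon<|x|_h<\lambda^{-1/(Q-\alpha)}$. Write $s=(Q+\beta)/(Q-\alpha)$. The weighted measure of the annulus is $\frac{\omega_Q}{Q+\beta}(\lambda^{-s}-\varepsilon^{Q+\beta})$, so
$$\|\mathcal{H}_\alpha f_\varepsilon\|_{L^{s,\infty}}\geqslant \sup_{\lambda}\lambda\Bigl(\frac{\omega_Q}{Q+\beta}\bigl(\lambda^{-s}-\varepsilon^{Q+\beta}\bigr)\Bigr)^{1/s}=\sup_\lambda\Bigl(\frac{\omega_Q}{Q+\beta}\Bigr)^{1/s}\bigl(1-\lambda^{s}\varepsilon^{Q+\beta}\bigr)^{1/s}.$$
Letting $\lambda\to0^+$, this tends to $(\omega_Q/(Q+\beta))^{(Q-\alpha)/(Q+\beta)}$. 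So the operator norm is at least the claimed constant, which together with the upper bound gives equality.

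The only step needing care is the polar-coordinate formula on $\mathbb{H}^n$ for the homogeneous norm $|\cdot|_h$ with surface constant $\omega_Q$. I would justify it by differentiating $|B(0,r)|=\varOmega_Q r^Q$, using the dilation structure, which gives $\int_{\mathbb{H}^n}g(|x|_h)\,dx=\omega_Q\int_0^\infty g(r)r^{Q-1}\,dr$. A secondary point is that the lower bound is attained only in the limit $\lambda\to0$, not at any fixed $\lambda$. This causes no difficulty, because a weak-type quasi-norm is a supremum over $\lambda$.
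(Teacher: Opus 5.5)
Your proof is correct and follows essentially the same route as the paper. The upper bound uses the same pointwise estimate $|\mathcal{H}_\alpha f(x)|\le \|f\|_{L^1}|x|_h^{\alpha-Q}$ and the same weighted-ball computation. The lower bound uses a ball indicator as the extremal function with $\lambda\to 0^+$; the paper takes $\chi_{\{|x|_h\le 1\}}$ and computes the exact annular level set, whereas you normalize and only use containment of the outer annulus, which suffices. Your reading of the weight as $|x|_h^{\beta}$ is consistent with the paper's proof, which uses a single exponent ($\gamma$) for both the weight and the target space.
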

\begin{proof}[Proof of Theorem \ref{thm2.1}.]\renewcommand{\qedsymbol}{}
	Noticing $Q-\frac{\beta}{p-1}>Q-\frac{Q\left( p-1 \right)}{p-1}=0$, by $\mathrm{H}\ddot{\mathrm{o}}\mathrm{lder}$'s inequality, we have
	\begin{align*}
		|\mathcal{H} _{\alpha}f\left( x \right) |
		&=\left| \frac{1}{|x|_{h}^{Q-\alpha}}\int_{|y|_h<|x|_h}{f\left( y \right) dy} \right|=\left| \frac{1}{|x|_{h}^{Q-\alpha}}\int_{|y|_h<|x|_h}{|y|_{h}^{-\frac{\beta}{p}}f\left( y \right) |y|_{h}^{\frac{\beta}{p}}dy} \right|
		\\
		&\leqslant \frac{1}{|x|_{h}^{Q-\alpha}}\left( \int_{|y|_h<|x|_h}{|y|_{h}^{-\frac{\beta p\prime}{p}}dy} \right) ^{\frac{1}{p^{\prime}}}\left( \int_{|y|_h<|x|_h}{\left| f\left( y \right) \right|^p\left| y \right|_{h}^{\beta}dy} \right) ^{\frac{1}{p}}
		\\
		&\leqslant \frac{1}{|x|_{h}^{Q-\alpha}}\left( \omega _Q\int_0^{\left| x \right|_h}{r ^{-\frac{\beta}{p}\times \frac{p}{p-1}+Q-1}dr} \right) ^{\frac{1}{p^{\prime}}}\left( \int_{\mathbb{H} ^n}{\left| f\left( y \right) \right|^p\left| y \right|_{h}^{\beta}}dy \right) ^{\frac{1}{p}}
		\\
		&=|x|_{h}^{\alpha -Q}\times \left( \omega _Q\times \frac{\left| x \right|_{h}^{Q-\frac{\beta}{p-1}}}{Q-\frac{\beta}{p-1}} \right) ^{\frac{1}{p^{\prime}}}\left\| f \right\| _{L^p(\mathbb{H} ^n,\left| x \right|_{h}^{\beta})}
		\\
		&=\left( \frac{\omega _Q\left( p-1 \right)}{pQ-Q-\beta} \right) ^{\frac{1}{p^{\prime}}}\left\| f \right\| _{L^p(\mathbb{H} ^n,\left| x \right|_{h}^{\beta})}\left| x \right|_{h}^{-\frac{Q}{p}-\frac{\beta}{p}+\alpha}=C_{p,Q,\beta ,f}\left| x \right|_{h}^{-\frac{Q}{p}-\frac{\beta}{p}+\alpha},
	\end{align*}
	where $$C_{p,Q,\beta ,f}=\left( \frac{\omega _Q\left( p-1 \right)}{pQ-Q-\beta} \right) ^{\frac{1}{p^{\prime}}}\left\| f \right\| _{L^p( \mathbb{H} ^n,\left| x \right|_{h}^{\beta})}.$$
	
	\noindent Noticing $|\mathcal{H} _{\alpha}f\left( x \right) |\leqslant C_{p,Q,\beta ,f}\left| x \right|_{h}^{-\frac{Q}{p}-\frac{\beta}{p}+\alpha}$, then we have $\left\{ x:|\mathcal{H} _{\alpha}f\left( x \right) |>\lambda \right\} \subset \{ x:C_{p,Q,\beta ,f}\left| x \right|_{h}^{-\frac{Q}{p}-\frac{\beta}{p}+\alpha}>\lambda \}$. 
	
	\noindent Since 
	$$Q+\gamma >0 \,\,\,\text{and} \,\,\,\frac{\gamma +Q}{q}+\alpha =\frac{\beta +Q}{p},$$ we have
	\begin{align*}
		\left\| \mathcal{H} _{\alpha}f \right\| _{L^{q,\infty}( \mathbb{H} ^n,\left| x \right|_{h}^{\gamma} )}
		&=\underset{\lambda >0}{\mathrm{sup}}\,\,\lambda \left( \int_{\mathbb{H} ^n}{\chi _{\left\{ x:|\mathcal{H} _{\alpha}f\left( x \right) |>\lambda \right\}}}\left( x \right) \left| x \right|_{h}^{\gamma}dx \right) ^{\frac{1}{q}}
		\\
		&\leqslant \underset{\lambda >0}{\mathrm{sup}}\,\,\lambda \left( \int_{\mathbb{H} ^n}{\chi _{\{ x:C_{p,Q,\beta ,f}\left| x \right|_{h}^{-\frac{Q}{p}-\frac{\beta}{p}+\alpha}>\lambda \}}}\left( x \right) \left| x \right|_{h}^{\gamma}dx \right) ^{\frac{1}{q}}
		\\
		&=\underset{\lambda >0}{\mathrm{sup}}\,\,\lambda \left( \int_{B( 0,( \frac{C_{p,Q,\beta ,f}}{\lambda} ) ^{\frac{q}{Q+\gamma}} )}{\left| x \right|_{h}^{\gamma}dx} \right) ^{\frac{1}{q}}=\underset{\lambda >0}{\mathrm{sup}}\,\,\lambda \left( \omega _Q\int_0^{( \frac{C_{p,Q,\beta ,f}}{\lambda} ) ^{\frac{q}{Q+r}}}{r ^{Q-1+\gamma}dr} \right) ^{\frac{1}{q}}
		\\
		&=\underset{\lambda >0}{\mathrm{sup}}\,\,\lambda \left( \omega _Q\times \frac{( C_{p,Q,\beta ,f}/\lambda ) ^q}{Q+\gamma} \right) ^{\frac{1}{q}}=\underset{\lambda >0}{\mathrm{sup}}\,\,C_{p,Q,\beta ,f}\times \left( \frac{\omega _Q}{Q+\gamma} \right) ^{\frac{1}{q}}
		\\
		&=\left( \frac{\omega _Q}{Q+\gamma} \right) ^{\frac{1}{q}}\times \left( \frac{\omega _Q\left( p-1 \right)}{pQ-Q-\beta} \right) ^{\frac{1}{p^{\prime}}}\left\| f \right\| _{L^p( \mathbb{H} ^n,\left| x \right|_{h}^{\beta} )}.
	\end{align*}
	Thus 
	$$\left\| \mathcal{H}_{\alpha} \right\| _{L^p\left( \mathbb{H} ^n,\left| x \right|_{h}^{\beta} \right) \rightarrow L^{q,\infty}\left( \mathbb{H} ^n,\left| x \right|_{h}^{\gamma} \right)}\leqslant \left( \frac{\omega _Q}{Q+\gamma} \right) ^{\frac{1}{q}}\left( \frac{\omega _Q\left( p-1 \right)}{pQ-Q-\beta} \right) ^{\frac{1}{p\prime}}.$$
	On the other hand, let
	$$f_0\left( x \right) =|x|_{h}^{-\frac{\beta}{p-1}}\chi _{\{ x:\left| x \right|_h\leqslant 1 \}}\left( x \right).$$
	Noticing $Q+\beta ( 1-\frac{p}{p-1} ) =Q-\frac{\beta}{p-1}>0$, we have
	\begin{align*}
		\left\| f_0 \right\| _{L^p( \mathbb{H} ^n,\left| x \right|_{h}^{\beta})}
		&=\left( \int_{\mathbb{H} ^n}{|\left| x \right|_{h}^{-\frac{\beta}{p-1}}\chi _{\{x:\left| x \right|_h\leqslant 1\}}}\left. \left( x \right) \right|^p\left| x \right|_{h}^{\beta}dx \right) ^{\frac{1}{p}}
		\\
		&=\left( \int_{|x|_h\leqslant 1}{\left| x \right|_{h}^{-\frac{\beta p}{p-1}}\left| x \right|_{h}^{\beta}}dx \right) ^{\frac{1}{p}}=\left( \omega _Q\int_0^1{r ^{\beta -\frac{p\beta}{p-1}+Q-1}dr} \right) ^{\frac{1}{p}}=\left( \frac{\omega _Q\left( p-1 \right)}{pQ-Q-\beta} \right) ^{\frac{1}{p}}<\infty. 
	\end{align*}
	So we have proved that $f_0\in L^p( \mathbb{H} ^n,\left| x \right|_{h}^{\beta} )$. Then we calculate $\mathcal{H} _{\alpha}\left( f_0 \right) \left( x \right)$.
	\begin{align*}
		\mathcal{H} _{\alpha}\left( f_0 \right) \left( x \right) &=\frac{1}{|x|_{h}^{Q-\alpha}}\int_{|y|_h<|x|_h}{|y|_{h}^{-\frac{\beta}{p-1}}\chi _{\{ y:\left| y \right|_h\leqslant 1 \}}\left( y \right) dy}
		\\
		&=\frac{1}{|x|_{h}^{Q-\alpha}}\left\{ \begin{array}{l}
			\int_{|y|_h<|x|_h}{|y|_{h}^{-\frac{\beta}{p-1}}dy,\quad\left| x \right|_h\leqslant 1}\\
			\int_{|y|_h\leqslant 1}{|y|_{h}^{-\frac{\beta}{p-1}}dy,\quad\,\,\,\,\left| x \right|_h>1}\\
		\end{array} \right. =\frac{\omega _Q}{\left| x \right|_{h}^{Q-\alpha}}\left\{ \begin{array}{l}
			\int_0^{\left| x \right|_h}{r ^{Q-1-\frac{\beta}{p-1}}dr},\quad r \leqslant 1\\
			\int_0^1{r ^{Q-1-\frac{\beta}{p-1}}dr},\quad\,\,\,\, r >1\\
		\end{array} \right. 
		\\
		&=\frac{\omega _Q\left( p-1 \right)}{pQ-Q-\beta}\left\{ \begin{array}{l}
			|x|_{h}^{\alpha -\frac{\beta}{p-1}},\,\,\,\,\,\,\left| x \right|_h\leqslant 1\\
			|x|_{h}^{\alpha -Q},\quad\,\,\,\left| x \right|_h>1\\
		\end{array} \right..
	\end{align*}
	Denote $C_{p,Q,\beta}=\frac{\omega _Q\left( p-1 \right)}{pQ-Q-\beta}$ and 
	$$\left\{ x:|\mathcal{H} _{\alpha}\left( f_0 \right) \left( x \right) |>\lambda \right\} =\{ \left| x \right|_h\leqslant 1:C_{p,Q,\beta}\left| x \right|_{h}^{\alpha -\frac{\beta}{p-1}}>\lambda \} \cup \{ \left| x \right|_h>1:C_{p,Q,\beta}\left| x \right|_h^{\alpha-Q}>\lambda \}.$$
	
	When $0<\lambda <C_{p,Q,\beta}$, noticing $\alpha <\frac{\beta}{p-1}\,\,$ and $\beta <Q\left( p-1 \right)$, we have $\alpha <Q$ and 
	\begin{align*}
		\left\{ x:|\mathcal{H} _{\alpha}\left( f_0 \right) \left( x \right) |>\lambda \right\} 
		&=\left\{ \left| x \right|_h\leqslant 1 \right\} \cup \left\{ \left| x \right|_h>1:\left| x \right|_h\leqslant \left( \frac{C_{p,Q,\beta}}{\lambda} \right) ^{\frac{1}{Q-\alpha}} \right\} 
		\\
		&=\left\{ x:\left| x \right|_h<\left( \frac{C_{p,Q,\beta}}{\lambda} \right) ^{\frac{1}{Q-\alpha}} \right\}.
	\end{align*}
	
	When $\lambda \geqslant C_{p,Q,\beta}$, noticing $\alpha <\frac{\beta}{p-1}\,\,$ and $\beta <Q\left( p-1 \right)$, we have $\alpha <Q$ and 
	$$\left\{ x:|\mathcal{H} _{\alpha}\left( f_0 \right) \left( x \right) |>\lambda \right\} =\left\{ x:\left| x \right|_h<\left( \frac{C_{p,Q,\beta}}{\lambda} \right) ^{\frac{1}{\frac{\beta}{p-1}-\alpha}} \right\}.$$
	Based on the above analysis, we have 
	\begin{align*}
		&\left\| \mathcal{H} _{\alpha}\left( f_0 \right) \right\| _{L^{q,\infty}( \mathbb{H} ^n,\left| x \right|_{h}^{\gamma} )}
		\\
		&=\max \left\{ \underset{0<\lambda <C_{p,Q,\beta}}{\mathrm{sup}}\lambda \left( \int_{\mathbb{H} ^n}{\chi _{\left\{ x:|\mathcal{H} _{\alpha}\left( f_0 \right) \left( x \right) |>\lambda \right\}}\left( x \right) |x|_{h}^{\gamma}dx\,\,} \right) ^{\frac{1}{q}},\underset{C_{p,Q,\beta}\leqslant \lambda}{\mathrm{sup}}\lambda \left( \int_{\mathbb{H} ^n}{\chi _{\left\{ x:|\mathcal{H} _{\alpha}\left( f_0 \right) \left( x \right) |>\lambda \right\}}\left( x \right) |x|_{h}^{\gamma}dx\,\,} \right) ^{\frac{1}{q}} \right\} 
		\\
		&=:\max \left\{ M_1,M_2 \right\} .
	\end{align*}
	Now we first calculate $M_1$. Since 
	$$\left\| f_0 \right\| _{L^p( \mathbb{H} ^n,\left| x \right|_{h}^{\beta} )}=\left( \frac{\omega _Q\left( p-1 \right)}{pQ-Q-\beta} \right) ^{\frac{1}{p}},\quad\gamma >-Q,	$$
	and
	$$1-\frac{Q+\gamma}{\left( Q-\alpha \right) q}=1-\frac{1}{Q-\alpha}( \frac{\beta +Q}{p}-\alpha ) =\frac{Q\left( p-1 \right) -\beta}{p\left( Q-\alpha \right)}>0,$$
	we have 
	\begin{align*}
		M_1&=\underset{0<\lambda <C_{p,Q,\beta}}{\mathrm{sup}}\lambda \left( \int_{\mathbb{H} ^n}{\chi _{\left\{ x:|\mathcal{H} _{\alpha}\left( f_0 \right) \left( x \right) |>\lambda \right\}}\left( x \right) |x|_{h}^{\gamma}dx\,\,} \right) ^{\frac{1}{q}}=\underset{0<\lambda <C_{p,Q,\beta}}{\mathrm{sup}}\lambda \left( \int_{\left| x \right|_h<( \frac{C_{p,Q,\beta}}{\lambda} ) ^{\frac{1}{Q-\alpha}}}{|x|_{h}^{\gamma}dx\,\,} \right) ^{\frac{1}{q}}
		\\
		&=\underset{0<\lambda <C_{p,Q,\beta}}{\mathrm{sup}}\left( \frac{\omega _Q}{Q+\gamma} \right) ^{\frac{1}{q}}\left( C_{p,Q,\beta} \right) ^{\frac{Q+\gamma}{\left( Q-\alpha \right) q}}\lambda ^{1-\frac{Q+\gamma}{\left( Q-\alpha \right) q}}
		\\
		&=\left( \frac{\omega _Q}{Q+\gamma} \right) ^{\frac{1}{q}}C_{p,Q,\beta}=\left( \frac{\omega _Q}{Q+\gamma} \right) ^{\frac{1}{q}}\times \left( \frac{\omega _Q\left( p-1 \right)}{pQ-Q-\beta} \right) ^{\frac{1}{p}+\frac{1}{p^{\prime}}}
		\\
		&=\left( \frac{\omega _Q}{Q+\gamma} \right) ^{\frac{1}{q}}\left( \frac{\omega _Q\left( p-1 \right)}{pQ-Q-\beta} \right) ^{\frac{1}{p^{\prime}}}\left\| f_0 \right\| _{L^p( \mathbb{H} ^n,\left| x \right|_{h}^{\beta} )}.
	\end{align*}
	Then we calculate $M_2$, noticing $\left\| f_0 \right\| _{L^p( \mathbb{H} ^n,\left| x \right|_{h}^{\beta} )}=\left( \frac{\omega _Q\left( p-1 \right)}{pQ-Q-\beta} \right) ^{\frac{1}{p}}$, $\gamma >-Q$\, and 
	$$1-\frac{Q+\gamma}{( \frac{\beta}{p-1}-\alpha) q}=1-\frac{1}{\frac{\beta}{p-1}-\alpha}( \frac{\beta +Q}{p}-\alpha ) =\frac{\beta -Q\left( p-1 \right)}{p(\frac{\beta}{p-1}-\alpha )(p-1)}<0,$$
	we have
	\begin{align*}
		M_2&=\underset{C_{p,Q,\beta}\leqslant  \lambda}{\mathrm{sup}}\lambda \left( \int_{\mathbb{H} ^n}{\chi _{\left\{ x:|\mathcal{H} _{\alpha}\left( f_0 \right) \left( x \right) |>\lambda \right\}}\left( x \right) |x|_{h}^{\gamma}dx\,\,} \right) ^{\frac{1}{q}}=\underset{C_{p,Q,\beta}\geqslant \lambda}{\mathrm{sup}}\lambda \left( \int_{\left| x \right|_h<( \frac{C_{p,Q,\beta}}{\lambda}) ^{\frac{1}{\frac{\beta}{p-1}-\alpha}}}{|x|_{h}^{\gamma}dx\,\,} \right) ^{\frac{1}{q}}
		\\
		&=\underset{C_{p,Q,\beta}\leqslant  \lambda}{\mathrm{sup}}\left( \frac{\omega _Q}{Q+r} \right) ^{\frac{1}{q}}\left( C_{p,Q,\beta} \right) ^{\frac{Q+\gamma}{( \frac{\beta}{p-1}-\alpha )}}\lambda ^{1-\frac{Q+\gamma}{( \frac{\beta}{p-1}-\alpha )}}
		\\
		&=\left( \frac{\omega _Q}{Q+\gamma} \right) ^{\frac{1}{q}}C_{p,Q,\beta}=\left( \frac{\omega _Q}{Q+r} \right) ^{\frac{1}{q}}\times \left( \frac{\omega _Q\left( p-1 \right)}{pQ-Q-\beta} \right) ^{\frac{1}{p}+\frac{1}{p^{\prime}}}
		\\
		&=\left( \frac{\omega _Q}{Q+\gamma} \right) ^{\frac{1}{q}}\left( \frac{\omega _Q\left( p-1 \right)}{pQ-Q-\beta} \right) ^{\frac{1}{p^{\prime}}}\left\| f_0 \right\| _{L^p( \mathbb{H} ^n,\left| x \right|_{h}^{\beta})}.
	\end{align*}
	Its easy to see that $M_1=M_2$, and then
	$$\left\| \mathcal{H} _{\alpha} \right\| _{L^p\left( \mathbb{H} ^n,\left| x \right|_{h}^{\beta} \right) \rightarrow \,\,\left\| \mathcal{H} _{\alpha} \right\| _{L^{p,\infty}\left( \mathbb{H} ^n,\left| x \right|_{h}^{\gamma} \right)}}=\left( \frac{\omega _Q}{Q+\gamma} \right) ^{\frac{1}{q}}\left( \frac{\omega _Q\left( p-1 \right)}{pQ-Q-\beta} \right) ^{\frac{1}{p^{\prime}}}.$$
	This finishes the proof of Theorem \ref{thm2.1}.
\end{proof}\renewcommand{\qedsymbol}{}
\begin{proof}[Proof of Theorem \ref{thm2.2}.]\renewcommand{\qedsymbol}{}
	It is easy to see that
	$$\left| \mathcal{H} _{\alpha}f\left( x \right) \right|=\left| \frac{1}{|x|_{h}^{Q-a}}\int_{|y|_h<|x|_h}{f\left( y \right) dy} \right|\leqslant \left| \frac{1}{|x|_{h}^{Q-a}}\int_{\mathbb{H} ^n}{f\left( y \right)}dy \right|=|x|_{h}^{\alpha -Q}\left\| f \right\| _{L^1\left( \mathbb{H} ^n \right)}.$$
	Notice $\left| \mathcal{H} _{\alpha}f\left( x \right) \right|\leqslant |x|_{h}^{\alpha -Q}\left\| f \right\| _{L^1\left( \mathbb{H} ^n \right)}$, and we have $\left\{ x:\left| \mathcal{H} _{\alpha}f\left( x \right) \right|>\lambda \right\} \subset \{ x:|x|_{h}^{\alpha -Q}\left\| f \right\| _{L^1\left( \mathbb{H} ^n \right)}>\lambda \}$. Since $Q-\alpha >0$ and $Q+\gamma >0$, we have
	\begin{align*}
		&\left\| \mathcal{H} _{\alpha}f \right\| _{L^{\left( Q+\gamma \right) /\left( Q-\alpha \right) ,\infty}\left( \mathbb{H} ^n,\left| x \right|_{h}^{\gamma} \right)}=\underset{\lambda >0}{\mathrm{sup}}\,\,\lambda \left( \int_{\mathbb{H} ^n}{\chi _{\left\{ x:|\mathcal{H} _{\alpha}f\left( x \right) |>\lambda \right\}}\left( x \right) |x|_{h}^{\gamma}dx\,\,} \right) ^{\frac{Q-\alpha}{Q+\gamma}}
		\\
		&\leqslant \underset{\lambda >0}{\mathrm{sup}}\,\,\lambda \left( \int_{\mathbb{H} ^n}{\chi _{\{ x:|x|_{h}^{\alpha -Q}\left\| f \right\| _{L^1\left( \mathbb{H} ^n \right)}>\lambda\}}\left( x \right) |x|_{h}^{\gamma}dx\,\,} \right) ^{\frac{Q-\alpha}{Q+\gamma}}=\underset{\lambda >0}{\mathrm{sup}}\,\,\lambda \left( \int_{\left| x \right|_h<( \left\| f \right\| _{L^1\left( \mathbb{H} ^n \right)}/\lambda) ^{\frac{1}{Q-\alpha}}}{|x|_{h}^{\gamma}dx\,\,} \right) ^{\frac{Q-\alpha}{Q+\gamma}}
		\\
		&=\underset{\lambda >0}{\mathrm{sup}}\,\,\lambda \left( \omega _Q\int_0^{( \left\| f \right\| _{L^1\left( \mathbb{H} ^n \right)}/\lambda ) ^{\frac{1}{Q-\alpha}}}{r^{Q-1+\gamma}dr} \right) ^{\frac{Q-\alpha}{Q+\gamma}}=\left( \frac{\omega _Q}{Q+\gamma} \right) ^{\frac{Q-\alpha}{Q+\gamma}}\left\| f \right\| _{L^1\left( \mathbb{H} ^n \right)}.
	\end{align*}
	Thus 
	$$\left\| \mathcal{H} _{\alpha}f\left( x \right) \right\| _{L^{\left( Q+\gamma \right) /\left( Q-\alpha \right) ,\infty}\left( \mathbb{H} ^n,\left| x \right|_{h}^{\gamma} \right)}\leqslant \left( \frac{\omega _Q}{Q+\gamma} \right) ^{\frac{Q-\alpha}{Q+\gamma}}\left\| f \right\| _{L^1\left( \mathbb{H} ^n \right)}.$$
	On the other hand, let $f_0\left( x \right) =\chi _{\{ x:\left| x \right|_h\leqslant 1 \}}\left( x \right)$.
	Then we have
	$$\left\| f_0 \right\| _{L^1\left( \mathbb{H} ^n \right)}=\int_{\mathbb{H} ^n}{\chi _{\{x:\left| x \right|_h\leqslant 1\}}\left( x \right) dx=\frac{\omega _Q}{Q}}<\infty ,$$
	and so $f_0\in L^1\left( \mathbb{H} ^n \right)$ and
	\begin{align*}
		\mathcal{H} _{\alpha}\left( f_0 \right) \left( x \right) &=\frac{1}{|x|_{h}^{Q-a}}\int_{|y|_h<|x|_h}{\chi _{\left\{ y:|y|_h\leqslant 1 \right\}}\left( y \right) dy}
		\\
		&=\frac{1}{|x|_{h}^{Q-a}}\left\{ \begin{array}{l}
			\int_{|y|_h<|x|_h}{dy},\quad |x|_h\leqslant 1\\
			\int_{|y|_h\leqslant 1}{dy},\quad \,\,\,\, |x|_h>1\\
		\end{array} \right.=\frac{\omega _Q}{Q}\left\{ \begin{array}{l}
			|x|_{h}^{\alpha},\quad \,\,\,\,\,\, |x|_h\leqslant 1\\
			|x|_{h}^{\alpha -Q},\quad|x|_h>1\\
		\end{array} \right..
	\end{align*}
	Denote $C_Q=\frac{\omega _Q}{Q}$ and $$\left\{ x:|\mathcal{H} _{\alpha}\left( f_0 \right) \left( x \right) |>\lambda \right\} =\left\{ |x|_h\leqslant 1:|x|_{h}^{\alpha}C_Q>\lambda \right\} \cup \{ |x|_h>1:|x|_{h}^{\alpha -Q}C_Q>\lambda \}.$$
	
	When $\lambda \geqslant C_Q$, noticing $0<\alpha <Q$, we have $\left\{ x:|\mathcal{H} _{\alpha}\left( f_0 \right) \left( x \right) |>\lambda \right\} =\varnothing$.
	
	When $0<\lambda <C_Q$, noticing $0<\alpha <Q$, we have 
	$$\left\{ x:|\mathcal{H} _{\alpha}\left( f_0 \right) \left( x \right) |>\lambda \right\}=\left\{ x:\left( \frac{\lambda}{C_Q} \right) ^{\frac{1}{\alpha}}<\left| x \right|_h<\left( \frac{C_Q}{\lambda} \right) ^{\frac{1}{Q-\alpha}} \right\}.$$
	We have
	\begin{align*}
		&\left\| \mathcal{H} _{\alpha}\left( f_0 \right) \left( x \right) \right\| _{L^{\left( Q+\gamma \right) /\left( Q-\alpha \right) ,\infty}\left( \mathbb{H} ^n,\left| x \right|_{h}^{\gamma} \right)}
		\\
		&=\max \left\{ \underset{0<\lambda <C_Q}{\mathrm{sup}}\lambda \left( \int_{\mathbb{H} ^n}{\chi _{\left\{ x:|\mathcal{H} _{\alpha}f_0\left( x \right) |>\lambda \right\}}\left( x \right) |x|_{h}^{\gamma}dx\,\,} \right) ^{\frac{Q-\alpha}{Q+\gamma}},\underset{\lambda \geqslant C_Q}{\mathrm{sup}}\lambda \left( \int_{\mathbb{H} ^n}{\chi _{\left\{ x:|\mathcal{H} _{\alpha}f_0\left( x \right) |>\lambda \right\}}\left( x \right) |x|_{h}^{\gamma}dx\,\,} \right) ^{\frac{Q-\alpha}{Q+\gamma}} \right\} 
		\\
		&=:\max \left\{ M_3,M_4 \right\} .
	\end{align*}
	When $\lambda \geqslant C_Q$, then $\left\{ x:|\mathcal{H} _{\alpha}f_0\left( x \right) |>\lambda \right\} =\varnothing$, and we have $M_4=0$. Then, we only need to calculate $M_3$.
	In addition, noticing 
	$$Q+\beta >0,\quad0<\alpha <Q,\quad\left\| f_0 \right\| _{L^1\left( \mathbb{H} ^n \right)}=\frac{\omega _Q}{Q},$$
	we have
	\begin{align*}
		M_3&=\underset{0<\lambda <C_Q}{\mathrm{sup}}\lambda \left( \int_{\mathbb{H} ^n}{\chi _{\left\{ x:|\mathcal{H} _{\alpha}f_0\left( x \right) |>\lambda \right\}}\left( x \right) |x|_{h}^{\gamma}dx\,\,} \right) ^{\frac{Q-\alpha}{Q+\gamma}}
		\\
		&=\underset{0<\lambda <C_Q}{\mathrm{sup}}\lambda \left( \int_{(\frac{\lambda}{C_Q})^{\frac{1}{\alpha}}<\left| x \right|_h<(\frac{C_Q}{\lambda})^{\frac{1}{Q-\alpha}}}{|x|_{h}^{\gamma}dx\,\,} \right) ^{\frac{Q-\alpha}{Q+\gamma}}=\underset{0<\lambda <C_Q}{\mathrm{sup}}\lambda \left( \omega _Q\int_{(\frac{\lambda}{C_Q})^{\frac{1}{\alpha}}}^{(\frac{C_Q}{\lambda})^{\frac{1}{Q-\alpha}}}{r^{Q-1+\gamma}dr} \right) ^{\frac{Q-\alpha}{Q+\gamma}}
		\\
		&=\underset{0<\lambda <C_Q}{\mathrm{sup}}\left( \frac{\omega _Q}{Q+\gamma} \right) ^{\frac{Q-\alpha}{Q+\gamma}}\left( C_{Q}^{\frac{Q+\gamma}{Q-\alpha}}-\frac{\lambda ^{\frac{Q+\gamma}{\alpha}+\frac{Q+\gamma}{Q-\alpha}}}{C_{Q}^{\frac{Q+\gamma}{\alpha}}} \right) ^{\frac{Q-\alpha}{Q+\gamma}}
		\\
		&=\left( \frac{\omega _Q}{Q+\gamma} \right) ^{\frac{Q-\alpha}{Q+\gamma}}C_Q=\left( \frac{\omega _Q}{Q+\gamma} \right) ^{\frac{Q-\alpha}{Q+\gamma}}\left\| f_0 \right\| _{L^1\left( \mathbb{H} ^n \right)}.
	\end{align*}
	Then
	$$\left\| \mathcal{H} _{\alpha} \right\| _{L^1\left( \mathbb{H} ^n \right) \rightarrow L^{\left( Q+\gamma \right) /\left( Q-\alpha \right) ,\infty}\left( \mathbb{H} ^n,\left| x \right|_{h}^{\gamma} \right)}=\left( \frac{\omega _Q}{Q+\gamma} \right) ^{\frac{Q-\alpha}{Q+\gamma}}.$$
	This finishes the proof of Theorem \ref{thm2.2}. Notices that Theorem 2.2 no longer holds when $\alpha=0$.
\end{proof}\renewcommand{\qedsymbol}{}

\section{Sharp weighted $\boldsymbol{L}^{\boldsymbol{p}}$ estimate for the integral operator with a kernel}
In this section, we will study the $m$-linear $n$-dimensional integral operator with a kernel on the Heisenberg group. Let  $K:\mathbb{H} ^n\times \cdots \times \mathbb{H} ^n\rightarrow \left( 0,\infty \right)$ be a measurable radial kernel such that $K\left( y_1,...,y_m \right) =K(\left| y_1 \right|_h,...,\left| y_m \right|_h) $, satisfying 
\begin{align}\label{3.1}
	C^h=\int_{\mathbb{H} ^n}{\cdots \int_{\mathbb{H} ^n}{K\left( y_1,...,y_m \right) \prod_{i=1}^m{\left| y_i \right|_{h}^{-\frac{\alpha _j}{q}-\frac{Q}{q_j}}}}}dy_1\cdots dy_m<\infty,
\end{align}
where $\alpha_j$, $q_j$, $q$, and $Q$ are the pre-defined indicator and some fixed indices, $j=1,2,...,m$. The $m$-linear $n$-dimensional integral operator with a kernel is defined by
\begin{align}\label{3.2}
	\mathcal{H} ^h\left( f_1,...,f_m \right) \left( x \right) =\int_{\mathbb{H} ^n}{\cdots}\int_{\mathbb{H} ^n}{K\left( y_1,...,y_m \right) f_1(\delta _{\left| x \right|_h}y_1)\cdots}f_m(\delta _{\left| x \right|_h}y_m)dy_1\cdots dy_m,
\end{align}
where $x\in \mathbb{R} ^n\backslash \left\{ 0 \right\}$ and $f_j$ is a measurable radial function on $\mathbb{H} ^n$ with $j=1,2,...,m$. Note that $\mathcal{H}^h$ is in fact an integral operator having a homogeneous radial $K$ of degree $-mn$.

In this paper, we will give the weighted $L^p$ estimate for the $m$-linear $n$-dimensional integral operator with a kernel on the Heisenberg group.
\begin{thm}\label{thm3.1}
	Let $m\in \mathbb{N}$, $1<q<\infty$, $\frac{1}{q}=\frac{1}{q_1}+\cdots +\frac{1}{q_m}$, $\alpha =\alpha _1+\cdots +\alpha _m$, $1<q_j<\infty$ with $j=1,...,m$, and $f_j$ be a radial function in $L^{q_{\boldsymbol{j}}}(\mathbb{H} ^n,\left| x \right|_{h}^{\frac{q_{\boldsymbol{j}}\alpha _{\boldsymbol{j}}}{q}})$. Assume that the kernel $K$ is the constant defined by (\ref{3.1}).
	
	\noindent Then
	\begin{equation}\label{3.3}
		\begin{aligned}
			\left\| \mathcal{H}^h\right\| _{L^{q_1}( \mathbb{H} ^n,\left| x \right|_{h}^{\frac{q_1\alpha _1}{q}} ) \times \cdot \cdot \cdot \times L^{q_m}( \mathbb{H} ^n,\left| x \right|_{h}^{\frac{q_m\alpha _m}{q}} ) \rightarrow L^q( \mathbb{H} ^n,\left| x \right|_{h}^{\alpha} )}=C^h.
		\end{aligned}
	\end{equation}
\end{thm}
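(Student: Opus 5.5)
The proof splits into an upper bound, obtained with Minkowski's integral inequality followed by H\"older's inequality, and a lower bound, obtained by testing on near-extremal power functions.

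\textbf{Upper bound.} Fix $x\neq 0$ and regard $\mathcal{H}^h(f_1,\dots,f_m)(x)$ as an average over $(y_1,\dots,y_m)$ with weight $K$. Since $q>1$, Minkowski's integral inequality in $L^q(\mathbb{H}^n,|x|_h^{\alpha})$ gives
\begin{align*}
\left\|\mathcal{H}^h(f_1,\dots,f_m)\right\|_{L^q(|x|_h^{\alpha})}\leqslant \int_{\mathbb{H}^n}\cdots\int_{\mathbb{H}^n}K(y_1,\dots,y_m)\left\|\prod_{j=1}^m f_j(\delta_{|\cdot|_h}y_j)\right\|_{L^q(|x|_h^{\alpha})}dy_1\cdots dy_m .
\end{align*}
Because $1/q=\sum_j 1/q_j$, H\"older's inequality with exponents $q_j/q$ splits the inner norm into $\prod_j \|f_j(\delta_{|\cdot|_h}y_j)\|_{L^{q_j}(|x|_h^{q_j\alpha_j/q})}$.

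For each factor, use that $f_j$ is radial, so $f_j(\delta_{|x|_h}y_j)=f_j(\delta_{|y_j|_h}x)$ as a function of $|x|_h|y_j|_h$. Then change variables $z=\delta_{|y_j|_h}x$, for which $dx=|y_j|_h^{-Q}dz$ and $|x|_h=|z|_h/|y_j|_h$. This gives
\begin{align*}
\|f_j(\delta_{|\cdot|_h}y_j)\|_{L^{q_j}(|x|_h^{q_j\alpha_j/q})}=|y_j|_h^{-\frac{\alpha_j}{q}-\frac{Q}{q_j}}\|f_j\|_{L^{q_j}(|x|_h^{q_j\alpha_j/q})}.
\end{align*}
Substituting back yields the bound $C^h\prod_j\|f_j\|$, with $C^h$ as in (\ref{3.1}). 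The one point to check is that radiality legitimately replaces the point $\delta_{|x|_h}y_j$ by $\delta_{|y_j|_h}x$; both have Heisenberg norm $|x|_h|y_j|_h$, so this holds.

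\textbf{Lower bound.} For $0<\varepsilon<1$, test on the radial functions
\begin{align*}
f_j^{\varepsilon}(x)=|x|_h^{-\frac{Q}{q_j}-\frac{\alpha_j}{q}-\frac{\varepsilon}{q_j}}\chi_{\{|x|_h>1\}}(x).
\end{align*}
In polar coordinates, $\|f_j^\varepsilon\|_{L^{q_j}}^{q_j}=\omega_Q/\varepsilon$, so $\prod_j\|f_j^\varepsilon\|=(\omega_Q/\varepsilon)^{1/q}$. For $|x|_h>1$ we have
\begin{align*}
\mathcal{H}^h(f^\varepsilon)(x)=|x|_h^{-\frac{Q+\alpha+\varepsilon}{q}}\int_{E_x}K\prod_j|y_j|_h^{-\frac{Q}{q_j}-\frac{\alpha_j}{q}-\frac{\varepsilon}{q_j}}dy,\qquad E_x=\{|y_j|_h>1/|x|_h \ \forall j\}.
\end{align*}
Restrict $x$ to $|x|_h>1/\varepsilon$, so that $E_x\supset\{|y_j|_h>\varepsilon\ \forall j\}=:E_\varepsilon$. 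Then
\begin{align*}
\|\mathcal{H}^h(f^\varepsilon)\|_{L^q}^q\geqslant \Bigl(\int_{E_\varepsilon}K\prod_j|y_j|_h^{-\frac{Q}{q_j}-\frac{\alpha_j}{q}-\frac{\varepsilon}{q_j}}dy\Bigr)^q\frac{\omega_Q\,\varepsilon^{\varepsilon}}{\varepsilon}.
\end{align*}
Dividing by $\prod_j\|f_j^\varepsilon\|^q=\omega_Q/\varepsilon$ leaves $\varepsilon^{\varepsilon/q}$ times that integral, and $\varepsilon^{\varepsilon/q}\to1$.

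\textbf{Main obstacle.} It remains to show the integral over $E_\varepsilon$ tends to $C^h$ as $\varepsilon\to0$. The extra factor $\prod_j|y_j|_h^{-\varepsilon/q_j}$ is at most $\varepsilon^{-\varepsilon/q}$ where some $|y_j|_h<1$, so it is not bounded by $1$ everywhere and dominated convergence does not apply directly. On the region where all $|y_j|_h\geqslant 1$ the factor is at most $1$, and it converges to $1$ pointwise; monotone convergence handles the rest. Better still, Fatou's lemma gives $\liminf\geqslant C^h$ directly: the integrands are nonnegative, $\chi_{E_\varepsilon}\to1$, and $|y_j|_h^{-\varepsilon/q_j}\to1$ pointwise. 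This lim inf is all the lower bound requires, so combining it with the upper bound gives (\ref{3.3}).
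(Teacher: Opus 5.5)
Your proof is correct. The upper bound is essentially the paper's: the paper pairs $\mathcal{H}^h(f_1,\dots,f_m)$ with $g\in L^{q'}(\mathbb{H}^n,|x|_h^{\alpha})$ and applies H\"older twice, which is the dual form of your Minkowski-then-H\"older argument. The scaling identity $\|f_j(\delta_{|\cdot|_h}y_j)\|=|y_j|_h^{-\alpha_j/q-Q/q_j}\|f_j\|$ comes from the same change of variables. The paper also inserts a spherical-averaging step to reduce to radial $f_j$; the radial hypothesis in the statement makes this unnecessary.

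The lower bound is where you genuinely differ. The paper truncates inside the unit ball, taking $f_{j,\varepsilon}=|x|_h^{-\alpha_j/q-Q/q_j+\varepsilon/q_j}\chi_{B(0,1)}$ together with a dual test function $g_\varepsilon$. It computes $\langle\mathcal{H}^h(f_{1,\varepsilon},\dots,f_{m,\varepsilon}),g_\varepsilon\rangle$ exactly and integrates by parts in the radial variable. It then splits $(0,\infty)^m$ into $[0,1]^m$ and the sets $E_i=\{r_i\ge 1,\ r_j\le r_i\}$, and applies dominated convergence on each piece to show the normalized pairing converges to $C^h$.

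You instead truncate outside the unit ball and bound $\|\mathcal{H}^h(f^\varepsilon)\|_{L^q}$ from below directly, by restricting to $|x|_h>1/\varepsilon$. There the $y$-domain contains the $x$-independent set $E_\varepsilon$, and the cost of discarding $1<|x|_h\le 1/\varepsilon$ is only the factor $\varepsilon^{\varepsilon/q}\to 1$. Fatou then gives $\liminf_{\varepsilon\to 0}\ge C^h$.

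Your route needs no dual function, no integration by parts and no $E_i$ bookkeeping. It yields only a $\liminf$ inequality rather than the exact limit of the ratio, but that is all the theorem requires.

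One small correction to your commentary: on $E_\varepsilon$ the extra factor satisfies $\prod_j|y_j|_h^{-\varepsilon/q_j}\le\varepsilon^{-\varepsilon/q}\le e^{1/(eq)}$ uniformly for $\varepsilon\in(0,1)$. So dominated convergence does apply, and it even gives $I_\varepsilon\to C^h$. Fatou is simply the quicker way to say it.
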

\begin{proof}[Proof.]\renewcommand{\qedsymbol}{}
	Consider that
	$$g_j\left( x \right) =\frac{1}{\omega _Q}\int_{\left| \xi _j \right|_{h=1}}{f_j(\delta _{\left| x \right|_h}\xi _j)}d\xi _j,\quad x\in \mathbb{H} ^n,\quad j=1,...,m.$$
	Obviously, $g_j$ satisfies $g_j\left( x \right) =g_j\left( \left| x \right|_h \right)$, and $\mathcal{H} ^h\left( f_1,...,f_m \right) \left( x \right)$ is equal to
	\begin{align*}
		&\mathcal{H} ^h\left( g_1,...,g_m \right) \left( x \right) 
		\\
		&=\int_{\mathbb{H} ^{mn}}{K\left( y_1,...,y_m \right) g_1(\delta _{| x |_h}y_1)\cdots g_m(\delta _{| x |_h}y_m)dy_1\cdots dy_m}
		\\
		&=\int_{\mathbb{H} ^{mn}}{K\left( y_1,...,y_m \right)}\prod_{j=1}^m{\left( \frac{1}{\omega _Q}\int_{| \xi _j |_h=1}{f_j(\delta _{\left| x \right|_h| y_j |_h}\xi _j)}d\xi _j \right)}dy_1\cdots dy_m
		\\
		&=\frac{1}{\omega _{Q}^{m}}\int_{\mathbb{H} ^{mn}}{K\left( y_1,...,y_m \right)}\prod_{j=1}^m{\left( \int_{| \xi _j |_h=1}{f_j(\delta _{\left| x \right|_h}\left( \delta _{| y_j |_h}\xi _j \right) )}d\xi _j \right)}dy_1\cdots dy_m
		\\
		&=\frac{1}{\omega _{Q}^{m}}\int_{\mathbb{H} ^{mn}}{K\left( y_1,...,y_m \right)}\prod_{j=1}^m{\left( \int_{| z_j |_h=| y_j |_h}{f_j(\delta _{\left| x \right|_h}z_j)| y_j |_{h}^{-Q}}dz_j \right)}dy_1\cdots dy_m
		\\
		&=\frac{1}{\omega _{Q}^{m}}\int_{\mathbb{H} ^{mn}}{\int_{\left| y_1 \right|_h=\left| z_1 \right|_h}{\cdots \int_{\left| y_m \right|_h=\left| z_m \right|_h}{K(\left| y_1 \right|_h,...,\left| y_m \right|_h)}}}\prod_{j=1}^m{f_j(\delta _{\left| x \right|_h}z_j)| y_j |_{h}^{-Q}}dy_1\cdots dy_mdz_m\cdots dz_1
		\\
		&=\frac{1}{\omega _{Q}^{m}}\int_{\mathbb{H} ^{mn}}{\int_{|t_1|_h=1}{\cdots \int_{|t_m|_h=1}{K(|z_1|_h,...,|z_m|_h)}}}\prod_{j=1}^m{f_j(\delta _{\left| x \right|_h}z_j)}dt_1\cdots dt_mdz_m\cdots dz_1
		\\
		&=\int_{\mathbb{H} ^{mn}}{K(z_1,...,z_m)}f_1(\delta _{\left| x \right|_h}z_1)\cdots f_m(\delta _{\left| x \right|_h}z_m)dz_1\cdots dz_m=\mathcal{H} ^h(f_1,...,f_m)(x).
	\end{align*}
	In the fourth to fifth lines, we let $z_j=\delta _{| y_j |_h}\xi _j$. From the fifth to sixth lines, we perform an integral permutation. In the sixth to seventh lines, we set $y_j=\delta _{| z_j |_h}t_j$. On the other hand, by applying Hölder's inequality, we conclude that
	\begin{align*}
		\left\| g_j \right\| _{L^{q_j}( \mathbb{H} ^n,\left| x \right|_{h}^{\frac{q_j\alpha _j}{q}} )}
		&=\left( \int_{\mathbb{H} ^n}{\left| \frac{1}{\omega _Q}\int_{| \xi _j |_h=1}{f_j(\delta _{\left| x \right|_h}\xi _j)}d\xi _j \right|^{q_j}\left| x \right|_{h}^{\frac{q_j\alpha _j}{q}}dx} \right) ^{\frac{1}{q_j}}
		\\
		&=\frac{1}{\omega _Q}\left( \int_{\mathbb{H} ^n}{\left| \int_{| \xi _j |_h=1}{f_j(\delta _{\left| x \right|_h}\xi _j)}d\xi _j \right|^{q_j}\left| x \right|_{h}^{\frac{q_j\alpha _j}{q}}dx} \right) ^{\frac{1}{q_j}}
		\\
		&\leqslant \frac{1}{\omega _Q}\left( \int_{\mathbb{H} ^n}{\int_{| \xi _j |_h=1}{|f_j(\delta _{\left| x \right|_h}\xi _j)|^{q_j}}d\xi _j}\left( \int_{| \xi _j |_h=1}{dx} \right) ^{q_j-1}| x |_{h}^{\frac{q_j\alpha _j}{q}}dx \right) ^{\frac{1}{q_j}}
		\\
		&={\omega _Q}^{-\frac{1}{q_j}}\left( \int_{\mathbb{H} ^n}{\int_{| \xi _j |_h=1}{|f_j(\delta _{\left| x \right|_h}\xi _j)|^{q_j}}d\xi _j}\left| x \right|_{h}^{\frac{q_j\alpha _j}{q}}dx \right) ^{\frac{1}{q_j}}
		\\
		&={\omega _Q}^{-\frac{1}{q_j}}\left( \int_{\mathbb{H} ^n}{\int_{| z_j |_h=\left| x \right|_h}{| f_j( z_j ) |^{q_j}}\left| x \right|_{p}^{-Q}dz_j}\left| x \right|_{h}^{\frac{q_j\alpha _j}{q}}dx \right) ^{\frac{1}{q_j}}
		\\
		&={\omega _Q}^{-\frac{1}{q_j}}\left( \int_{\mathbb{H} ^n}{\int_{\left| x \right|_h=| z_j |_h}{\left| x \right|_{h}^{-n}\left| x \right|_{h}^{\frac{q_j\alpha _j}{q}}dx| f_j( z_j ) |^{q_j}}dz_j} \right) ^{\frac{1}{q_j}}
		\\
		&={\omega _Q}^{-\frac{1}{q_j}}\left( \int_{\mathbb{H} ^n}{\int_{| t_j |_h=1}{| z_j |_{h}^{-Q}| t_j |_{h}^{-Q}| z_j |_{h}^{\frac{q_j\alpha _j}{q}}| t_j |_{h}^{\frac{q_j\alpha _j}{q}}}| f_j( z_j ) |^{q_j}| z_j |_{h}^{Q}dt_jdz_j} \right) ^{\frac{1}{q_j}}
		\\
		&=\left( \int_{\mathbb{H} ^n}{| f_j( z_j ) |^{q_j}| z_j |_{h}^{\frac{q_j\alpha _j}{q}}}dz_j \right) ^{\frac{1}{q_j}}=\left\| f_j \right\| _{L^{q_j}(\mathbb{H} ^n,\left| x \right|_{h}^{\frac{q_j\alpha _j}{q}})}.
	\end{align*}
	From the second to third lines, we apply Hölder's inequality. In the fourth to fifth lines, we let $z_j=\delta _{\left| x \right|_h}\xi _j$. From the fifth to sixth lines, we perform an integral permutation. In the sixth to seventh lines, we set $x=\delta _{| z_j |_h}t_j$.
	Therefore we have  
	$$\frac{\left\| \mathcal{H} ^h\left( f_1,...,f_m \right) \right\| _{L^q(\mathbb{H} ^n,\left| x \right|_{h}^{\alpha})}}{\prod_{j=1}^m{\left\| f_j \right\| _{L^{q_{\boldsymbol{j}}}(\mathbb{H} ^n,\left| x \right|_{h}^{\frac{q_{\boldsymbol{j}}\alpha _{\boldsymbol{j}}}{q}})}}}\leqslant \frac{\left\| \mathcal{H} ^h\left( g_1,...,g_m \right) \right\| _{L^q(\mathbb{H} ^n,\left| x \right|_{h}^{\alpha})}}{\prod_{j=1}^m{\left\| g_j \right\| _{L^{q_{\boldsymbol{j}}}(\mathbb{H} ^n,\left| x \right|_{h}^{\frac{q_{\boldsymbol{j}}\alpha _{\boldsymbol{j}}}{q}})}}},$$
	which implies that the operator $\mathcal{H} ^h$ and its restriction to the function $g$ satisfying $g_j\left( x \right) =g_j(\left| x \right|_h)$ have the same operator norm in $L^q(\mathbb{H} ^n,\left| x \right|_{h}^{\alpha})$. So without loss of generality, we assume that $f_j\in L^{q_{\boldsymbol{j}}}(\mathbb{H} ^n,\left| x \right|_{h}^{\frac{q_{\boldsymbol{j}}\alpha _{\boldsymbol{j}}}{q}})$ with $j=1,2,...,m$ satisfies that $f_j\left( x \right) =f_j( \left| x \right|_h)$ in the rest of the proof.
	Let $q^{\prime}$ is the conjugate number of $q$ and $g\in L^{q^{\prime}}(\mathbb{H} ^n,\left| x \right|_{h}^{\alpha})$. Using duality and Holder's inequality, and making a change of variables, we obtain the following sequence of inequalities:
	\begin{align*}
		&\left| \left< \mathcal{H} ^h\left( f_1,... ,f_m \right) ,g \right> \right|\,\,      
		\\
		&\leqslant \int_{\mathbb{H} ^{mn}}{\left| K\left( y_1,...,y_m \right) \right|}\int_{\mathbb{H} ^n}{\left| g\left( x \right) \right|}\left| f_1( \delta _{|x|_h}y_1 ) \right|\cdots \left| f_m( \delta _{|x|_h}y_m ) \right||x|_{h}^{\alpha}dxdy_1\cdots dy_m
		\\
		&=\int_{\mathbb{H} ^{mn}}{\left| K\left( y_1,... ,y_m \right) \right|}\int_{\mathbb{H} ^n}{\left| g\left( x \right) \right|}|x|_{h}^{\frac{\alpha}{q^{\prime}}}| f_1( \delta _{|x|_h}y_1) |\cdots | f_m( \delta _{|x|_h}y_m ) ||x|_{h}^{\frac{\alpha}{q}}dxdy_1\cdots dy_m
		\\
		&\leqslant \int_{\mathbb{H} ^{mn}}{\left| K\left( y_1,... ,y_m \right) \right|}\left( \int_{\mathbb{H} ^n}{\left| g\left( x \right) \right|^{q^{\prime}}}|x|_{h}^{\alpha}dx \right) ^{\frac{1}{q^{\prime}}}\left( \int_{\mathbb{H} ^n}{\left( | f_1( \delta _{|x|_h}y_1) |\cdots | f_m( \delta _{|x|_h}y_m)||x|_{h}^{\frac{\alpha}{q}} \right) ^qdx} \right) ^{\frac{1}{q}}dy_1\cdots dy_m
		\\
		&\leqslant \left\| g \right\| _{L^{q^{\prime}}\left( \mathbb{H} ^n,\left| x \right|_{h}^{\alpha} \right)}\int_{\mathbb{H} ^{mn}}{\left| K\left( y_1,...,y_m \right) \right|}\prod_{j=1}^m{\left( \int_{\mathbb{H} ^n}{\left| f_j( \delta _{|y_j|_h}x ) \right|^{q_j}|x|_{h}^{\frac{q_j\alpha _j}{q}}dx} \right) ^{\frac{1}{q_j}}}dy_1\cdots dy_m
		\\
		&=\left\| g \right\| _{L^{q^{\prime}}\left( \mathbb{H} ^n,\left| x \right|_{h}^{\alpha} \right)}\int_{\mathbb{H} ^{mn}}{\left| K\left( y_1,...,y_m \right) \right|}\prod_{j=1}^m{|y|_{h}^{-\frac{\alpha _j}{q}-\frac{Q}{q_j}}\prod_{j=1}^m{\left( \int_{\mathbb{H} ^n}{| f_j( z_j ) |^{q_j}|z_j|_{h}^{\frac{q_j\alpha _j}{q}}dx} \right)}^{\frac{1}{q_j}}}dy_1\cdots dy_m
		\\
		&=\int_{\mathbb{H} ^{mn}}{K\left( y_1,...,y_m \right)}\prod_{j=1}^m{|y|_{h}^{-\frac{\alpha _j}{q}-\frac{Q}{q_j}}}dy_1\cdot \cdot \cdot dy_m\left\| g \right\| _{L^{q^{\prime}}\left( \mathbb{H} ^n,\left| x \right|_{h}^{\alpha} \right)}\prod_{j=1}^m{\left\| f_j \right\| _{L^{q_j}( \mathbb{H} ^n,\left| x \right|_{h}^{\frac{q_j\alpha _j}{q}})}}
		\\
		&=C^h\left\| g \right\| _{L^{q^{\prime}}\left( \mathbb{H} ^n,\left| x \right|_{h}^{\alpha} \right)}\prod_{j=1}^m{\left\| f_j \right\| _{L^{q_j}( \mathbb{H} ^n,\left| x \right|_{h}^{\frac{q_j\alpha _j}{q}} )}}.
	\end{align*}
	This proves the first part of our theorem.
	
	For the second part, we will show that if the kernel $K$ is nonnegative, then the operator norm $\| \mathcal{H}^h \|$ of $\mathcal{H}^h$ is equal to $C^h$. For a positive number $\varepsilon$ and $i=1,2,...,m$, we define the sequences of functions $g_{\varepsilon}$ and $f_{j,\varepsilon}$ by
	$$g_{\varepsilon}\left( x \right) =|x|_{h}^{-\frac{Q+\alpha}{q^{\prime}}+\frac{\varepsilon}{q^{\prime}}}\chi _{B\left( 0,1 \right)}\left( x \right) ,\quad f_{j,\varepsilon}=|x|_{h}^{-\frac{\alpha _j}{q}-\frac{Q}{q_j}+\frac{\varepsilon}{q_j}}\chi _{B\left( 0,1 \right)}\left( x \right).$$
	By a simple computation,  we have
	\begin{align*}
		{\left\| g_{\varepsilon} \right\| ^{q^{\prime}}}_{L^{q^{\prime}}\left( \mathbb{H} ^n,\left| x \right|_{h}^{\alpha} \right)}
		&=\int_{\mathbb{H} ^n}{|x|_{h}^{-Q+\varepsilon}dx}=\left( \left( \int_{\mathbb{H} ^n}{\left( |x|_{h}^{-\frac{\alpha _j}{q}-\frac{Q}{q_j}+\frac{\varepsilon}{q_j}} \right) ^{q_j}\left| x \right|_{h}^{\frac{q_j\alpha _j}{q}}dx} \right) ^{\frac{1}{q_j}} \right) ^{q_j}
		\\
		&={\left\| f_{j,\varepsilon} \right\| ^{q^j}}_{L^{q^j}( \mathbb{H} ^n,\left| x \right|_{h}^{\frac{q_j\alpha _j}{q}})}=\left\| g_{\varepsilon} \right\| _{L^{q^{\prime}}( \mathbb{H} ^n,\left| x \right|_{h}^{\alpha} )}\prod_{j=1}^m{\left\| f_{j,\varepsilon} \right\| _{L^{q_j}( \mathbb{H} ^n,\left| x \right|_{h}^{\frac{q_j\alpha _j}{q}})}}=\frac{\omega _Q}{\varepsilon}.
	\end{align*}
	Therefore, we have
	\begin{align*}
		&\left| \left< \mathcal{H} ^h\left( f_{1,\varepsilon},...,f_{m,\varepsilon} \right) ,g_{\varepsilon} \right> \right|
		\\
		&=\int_{B\left( 0,1 \right)}{|x|_{h}^{-\frac{Q+\alpha}{q^{\prime}}+\frac{\varepsilon}{q^{\prime}}}}|x|_{h}^{\alpha}\int_{\mathbb{H} ^{mn}}{\left| K\left( y_1,...,y_m \right) \right|}\prod_{j=1}^m{f_{j,\varepsilon}(\delta _{|x|_h}y_j)dy_1\cdots dy_mdx}
		\\
		&=\int_{B\left( 0,1 \right)}{|x|_{h}^{-\frac{Q+\alpha}{q^{\prime}}+\frac{\varepsilon}{q^{\prime}}}}\left| x \right|_{h}^{\alpha}\int_{B(0,\frac{1}{\left| x \right|_h})}{\cdots}\int_{B(0,\frac{1}{\left| x \right|_h})}{K\left( y_1,...,y_m \right)}\prod_{j=1}^m{\left( \left| x \right|_h| y_j |_h \right) ^{-\frac{\alpha _j}{q}-\frac{Q}{q_j}+\frac{\varepsilon}{q_j}}}dy_1\cdots dy_mdx
		\\
		&=\int_{B\left( 0,1 \right)}{|x|_{h}^{-Q+\varepsilon}}\int_{B(0,\frac{1}{\left| x \right|_h})}{\cdots}\int_{B(0,\frac{1}{\left| x \right|_h})}{K\left( y_1,...,y_m \right)}\prod_{j=1}^m{| y_j |_{h}^{-\frac{\alpha _j}{q}-\frac{Q}{q_j}+\frac{\varepsilon}{q_j}}}dy_1\cdots dy_mdx
		\\
		&=\omega _Q\int_0^1{r^{\varepsilon -1}}\int_{B( 0,\frac{1}{r} )}{\cdots \int_{B( 0,\frac{1}{r} )}{K\left( y_1,...,y_m \right)}}\prod_{j=1}^m{| y_j |_{h}^{-\frac{\alpha _j}{q}-\frac{Q}{q_j}+\frac{\varepsilon}{q_j}}}dy_1\cdots dy_mdr
		\\
		&=\omega _Q\int_1^{\infty}{r^{-1-\varepsilon}}\int_{B\left( 0,r \right)}{\cdots \int_{B\left( 0,r \right)}{K\left( \left| y_1 \right|_h,...,\left| y_m \right|_h \right)}}\prod_{j=1}^m{| y_j |_{h}^{-\frac{\alpha _j}{q}-\frac{Q}{q_j}+\frac{\varepsilon}{q_j}}}dy_1\cdots dy_mdr
		\\
		&=-\frac{\omega _Q}{\varepsilon}\int_1^{\infty}{\left( r^{-\varepsilon} \right) ^{\prime}}\left( \omega _{Q}^{m}\int_0^r{\cdots}\int_0^r{K\left( r_1,...,r_m \right)}\prod_{j=1}^m{r_j^{-\frac{\alpha _j}{q}-\frac{Q}{q_j}+\frac{\varepsilon}{q_j}+Q-1}}dr_1\cdots dr_m \right) dr
		\\
		&=\frac{\omega _{Q}^{m+1}}{\varepsilon}\int_0^1{\cdots}\int_0^1{K\left( r_1,...,r_m \right)}\prod_{j=1}^m{r_j^{-\frac{\alpha _j}{q}-\frac{Q}{q_j}+\frac{\varepsilon}{q_j}+Q-1}}dr_1\cdots dr_m+\sum_{i=1}^m{L_i},
	\end{align*}
	where $L_i$ is defined as
	\begin{align*}
		L_i
		&=\frac{\omega _{Q}^{m+1}}{\varepsilon}\int_1^{\infty}{r^{-\varepsilon}}\int_0^r{\cdots}\int_0^r{K(r_1,...,\overset{\left( i \right)}{r},...,r_m)}r^{-\frac{\alpha _i}{q}-\frac{Q}{q_i}+\frac{\varepsilon}{q_i}+Q-1}
		\\
		&\quad \quad \quad \quad \quad \quad \quad \quad \quad \quad \times \prod_{j\ne i}^m{r_{j}^{-\frac{\alpha _j}{q}-\frac{Q}{q_j}+\frac{\varepsilon}{q_j}+Q-1}}dr_1...,\overset{\land}{dr_i}\cdots dr_mdr
		\\
		&=\frac{\omega _{Q}^{m+1}}{\varepsilon}\int_1^{\infty}{r_{i}^{-\varepsilon}}\int_0^{r_i}{\cdots}\int_0^{r_i}{K\left( r_1,...,r_m \right)}\prod_{j=1}^m{r_j^{-\frac{\alpha _j}{q}-\frac{Q}{q_j}+\frac{\varepsilon}{q_j}+Q-1}}dr_1\cdots \overset{\land}{dr_i}\cdots dr_mdr_i.
	\end{align*}
	Here $\overset{\land}{dr_i}$ means that we do not integrate with respect to the variable $r_i$. The last equality follows from integration by parts and the observation that, if we let
	$$F\left( x,...,x \right) =\int_0^x{\cdots}\int_0^x{K\left( r_1,...,r_m \right)}\prod_{j=1}^m{r_j^{-\frac{\alpha _j}{q}-\frac{Q}{q_j}+\frac{\varepsilon}{q_j}+Q-1}}dr_1\cdots dr_m,$$
	then 
	$$\frac{dF\left( x,...,x \right)}{dx}=\sum_{i=1}^m{\int_0^x{\cdots}\int_0^x{K( r_1,...,\overset{\left( i \right)}{x},... ,r_m)}}x^{-\frac{\alpha _i}{q}-\frac{Q}{q_i}+\frac{\varepsilon}{q_i}+Q-1}\times \prod_{j\ne i}^m{r_{j}^{-\frac{\alpha _j}{q}-\frac{Q}{q_j}+\frac{\varepsilon}{q_j}+Q-1}}dr_1\cdots \overset{\land}{dr_i}\cdots dr_m,$$
	where the upper index $(i)$ means that $x$ replaces the variable $r_i$ in the $i$-th position. By means of the previous step, we have
	\begin{equation}\label{3.3}
		\begin{aligned}
			&\frac{\left| \left< \mathcal{H}^h\left( f_{1,\varepsilon},...,f_{m,\varepsilon} \right) ,g_{\varepsilon} \right> \right|}{\left\| g_{\varepsilon} \right\| _{L^{q ^{\prime}}\left( \mathbb{H} ^n,\left| x \right|_{h}^{\alpha} \right)}\left\| f_{1,\varepsilon} \right\| _{L^{q_1}( \mathbb{H} ^n,\left| x \right|_{h}^{\frac{q_1\alpha _1}{q}} )}\cdots \left\| f_{m,\varepsilon} \right\| _{L^{q_m}( \mathbb{H} ^n,\left| x \right|_{h}^{\frac{q_m\alpha _m}{q}})}}
			\\
			&=\omega _{Q}^{m}\int_0^1{\cdots}\int_0^1{K\left( r_1,...,r_m \right)}\prod_{j=1}^m{r_{j}^{-\frac{\alpha _j}{q}-\frac{Q}{q_j}+\frac{\varepsilon}{q_j}+Q-1}}dr_1\cdots dr_m+\sum_{i=1}^m{\frac{\varepsilon L_i}{\omega _Q}}.
		\end{aligned}
	\end{equation}
	Let $E_i$ denote the domain of integral $L_i$ above (\ref{3.3}), that is,
	$$E_i=\left\{ \left( r_1,...,r_m \right) \in \left( 0,\infty \right) ^m:1\leqslant r_i<\infty ,0\leqslant r_j\leqslant r_i,j\ne i \right\}.$$
	Taking into account that $\frac{1}{q}=\frac{1}{q_1}+\cdots +\frac{1}{q_m}$, we can bound the integrand of $\frac{\varepsilon L_i}{\omega _Q}$ on $E_i$ as follows:
	\begin{align*}
		{r_i}^{-\varepsilon}K\left( r_1,...,r_m \right) \prod_{j=1}^m{r_{j}^{-\frac{\alpha _j}{q}-\frac{Q}{q_j}+\frac{\varepsilon}{q_j}+Q-1}}
		&\leqslant {r_i}^{-\varepsilon +\frac{\varepsilon}{q_1}+\cdots +\frac{\varepsilon}{q_m}}K\left( r_1,...,r_m \right) \prod_{j=1}^m{r_{j}^{-\frac{\alpha _j}{q}-\frac{Q}{q_j}+Q-1}}
		\\
		&={r_i}^{-\frac{\varepsilon}{q^{\prime}}}K\left( r_1,...,r_m \right) \prod_{j=1}^m{r_{j}^{-\frac{\alpha _j}{q}-\frac{Q}{q_j}+Q-1}}
		\\
		&\leqslant K\left( r_1,...,r_m \right) \prod_{j=1}^m{r_{j}^{-\frac{\alpha _j}{q}-\frac{Q}{q_j}+Q-1}}.
	\end{align*}
	For the integrand of the first term in (\ref{3.3}) on $\left[ 0,1 \right] ^m$, we also have that
	$$K\left( r_1,...,r_m \right) \prod_{j=1}^m{r_{j}^{-\frac{\alpha _j}{q}-\frac{Q}{q_j}+\frac{\varepsilon}{q_j}+Q-1}}\leqslant K\left( r_1,...,r_m \right) \prod_{j=1}^m{r_{j}^{-\frac{\alpha _j}{q}-\frac{Q}{q_j}+Q-1}}.$$
	Since the condition of kernel $K$ (\ref{3.1}) is equivence to
	\begin{equation}\label{3.4}
		\begin{aligned}
			C^h
			&=\int_{\mathbb{H} ^{mn}}{K\left( y_1,...,y_m \right)}\prod_{j=1}^m{| y_j |_{h}^{-\frac{\alpha _j}{q}-\frac{Q}{q_j}}}dy_1\cdots dy_m
			\\
			&=\omega _{Q}^{m}\int_0^{\infty}{\cdots}\int_0^{\infty}{K\left( r_1,...,r_m \right)}\prod_{j=1}^m{r_{j}^{-\frac{\alpha _j}{q}-\frac{Q}{q_j}+Q-1}}dr_1\cdots dr_m<\infty,
		\end{aligned}
	\end{equation}
	using assumption (\ref{3.4}), we can use the Lebesgue Dominated Convergence theorem, which implies that
	\begin{equation}\label{3.5}
		\begin{aligned}
			\underset{\varepsilon \rightarrow 0^+}{\lim}\frac{\varepsilon L_i}{\omega _Q}=\omega _{Q}^{m}\int_1^{\infty}{\int_0^{r_i}{\cdots}\int_0^{r_i}{K\left( r_1,...,r_m \right)}}\prod_{j=1}^m{r_{j}^{-\frac{\alpha _j}{q}-\frac{Q}{q_j}+Q-1}}dr_1\cdots dr_mdr_i
		\end{aligned}
	\end{equation}
	
	\noindent and
	\begin{equation}\label{3.6}
		\begin{aligned}
			&\underset{\varepsilon \rightarrow 0^+}{\lim}\omega _{Q}^{m}\int_0^1{\cdots}\int_0^1{K\left( r_1,...,r_m \right)}\prod_{j=1}^m{r_{j}^{-\frac{\alpha _j}{q}-\frac{Q}{q_j}+\frac{\varepsilon}{q_j}+Q-1}}dr_1\cdots dr_m
			\\
			&=\omega _{Q}^{m}\int_0^1{\cdots}\int_0^1{K\left( r_1,...,r_m \right)}\prod_{j=1}^m{r_{j}^{-\frac{\alpha _j}{q}-\frac{Q}{q_j}+Q-1}}dr_1\cdots dr_m.
		\end{aligned}
	\end{equation}
	Furthermore, we have 
	$$\left[ 0,1 \right] ^m\cup \left( \bigcup_{i=1}^m{E_i} \right) =\left( 0,\infty \right) ^m,$$
	and for $i,j=1,...,m$, any of the intersection  sets $\left[ 0,1 \right] ^m\cap E_i$, $E_i\cap E_j$, $i\ne j$, has Lebesgue measure zero in $\mathbb{H} ^n$. Consequently, (\ref{3.3}), (\ref{3.5}), and (\ref{3.6}) imply that
	\begin{align*}
		&\left\| \mathcal{H}^h \right\| _{L^{q_1}( \mathbb{H} ^n,\left| x \right|_{h}^{\frac{q_1\alpha _1}{q}}) \times \cdot \cdot \cdot \times L^{q_m}( \mathbb{H} ^n,\left| x \right|_{h}^{\frac{q_m\alpha _m}{q}}) \rightarrow L^q\left( \mathbb{H} ^n,\left| x \right|_{h}^{\alpha} \right)}
		\\
		&=\underset{\varepsilon \rightarrow 0^+}{\lim}\frac{\left| \left< \mathcal{H}^h\left( f_{1,\varepsilon},...,f_{m,\varepsilon} \right) ,g_{\varepsilon} \right> \right|}{\left\| g_{\varepsilon} \right\| _{L^{q ^{\prime}}( \mathbb{H} ^n,\left| x \right|_{h}^{\alpha})}\left\| f_{1,\varepsilon} \right\| _{L^{q_1}( \mathbb{H} ^n,\left| x \right|_{h}^{\frac{q_1\alpha _1}{q}} )}\cdots \left\| f_{m,\varepsilon} \right\| _{L^{q_m}( \mathbb{H} ^n,\left| x \right|_{h}^{\frac{q_m\alpha _m}{q}} )}}=C^h.
	\end{align*}
	This finishes the proof of Theorem \ref{thm3.1}.
\end{proof}\renewcommand{\qedsymbol}{}
\section{Application: Sharp weighted $\boldsymbol{L}^{\boldsymbol{p}}$ estimates for a class of integral operators }
By taking a particular kernel $K$ in operator $T$ defined by (\ref{3.1}), we can obtain sharp weighted $L^p$ estimates for Hardy, Hardy-Littlewood-P\'{o}lya, and Hilbert operators on the Heisenberg group. Our results in this section are as follows.
\begin{cor}\label{cor3.1}
	Assume that the real paramenters $q$, $q_j$, $\alpha$, and $\alpha_j$ with $j=1,2,...,m$ are the same as in Theorem \ref{thm3.1}, and $f_j$ is a radial function in $L^{q_{\boldsymbol{j}}}(\mathbb{H} ^n,\left| x \right|_{h}^{\frac{q_{\boldsymbol{j}}\alpha _{\boldsymbol{j}}}{q}})$. Assume also that $-\frac{\alpha _j}{q}-\frac{Q}{q_j}+Q>0$. Then
	\begin{equation}
		\begin{aligned}
			\left\| \mathcal{H}_1 ^h \right\| _{L^{q_1}(\mathbb{H} ^n,\left| x \right|_{h}^{\frac{q_1\alpha _1}{q}})\times \cdot \cdot \cdot \times L^{q_m}(\mathbb{H} ^n,\left| x \right|_{h}^{\frac{q_m\alpha _m}{q}})\rightarrow L^q(\mathbb{H} ^n,\left| x \right|_{h}^{\alpha})}=\frac{q\omega _Q2^{1-m}}{mqQ-\alpha -Q}\frac{\prod\nolimits_{j=1}^m{\varGamma ( ( Q-\frac{\alpha _j}{q}-\frac{Q}{q_j} ) /2 )}}{\varGamma ( ( mQ-\frac{\alpha}{q}-\frac{Q}{q} ) /2 )}.
		\end{aligned}
	\end{equation}
\end{cor}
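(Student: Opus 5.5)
The plan is to realise $\mathcal{H}_1^h$ as a special case of the operator $\mathcal{H}^h$ in (\ref{3.2}), so that Theorem \ref{thm3.1} gives the norm as the constant $C^h$ of (\ref{3.1}). The remaining work is then to evaluate $C^h$ explicitly for this kernel.

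\emph{Step 1: reduction.} In the definition of $\mathcal{H}_1^h(f_1,\dots,f_m)(x)$, substitute $y_j=\delta_{|x|_h}z_j$ for $j=1,\dots,m$. Each substitution gives $dy_j=|x|_h^{Q}dz_j$, so the Jacobian $|x|_h^{mQ}$ exactly cancels the prefactor $|x|_h^{-mQ}$. The constraint $|(y_1,\dots,y_m)|_h\le |x|_h$ becomes $|(z_1,\dots,z_m)|_h\le 1$ by homogeneity of the dilations. Hence
\[
\mathcal{H}_1^h(f_1,\dots,f_m)(x)=\int_{\mathbb{H}^{mn}}K(z_1,\dots,z_m)\,f_1(\delta_{|x|_h}z_1)\cdots f_m(\delta_{|x|_h}z_m)\,dz_1\cdots dz_m,
\]
with $K=\chi_{\{|(z_1,\dots,z_m)|_h\le 1\}}$. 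I interpret the product norm as $|(z_1,\dots,z_m)|_h=\bigl(\sum_j|z_j|_h^2\bigr)^{1/2}$; this is the reading consistent with the $\Gamma(\cdot/2)$ factors in the claimed constant. With that reading, $K$ depends only on $|z_1|_h,\dots,|z_m|_h$ and is nonnegative, so it is an admissible radial kernel. Theorem \ref{thm3.1} then yields that the norm equals $C^h$, provided $C^h<\infty$.

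\emph{Step 2: computing $C^h$.} Set $a_j=\frac{\alpha_j}{q}+\frac{Q}{q_j}$ and $b_j=Q-a_j$; the hypothesis of the corollary says exactly that $b_j>0$. Passing to polar coordinates in each factor, as in (\ref{3.4}), gives
\[
C^h=\omega_Q^m\int_{\{r_j>0,\ \sum_j r_j^2\le 1\}}\prod_{j=1}^m r_j^{b_j-1}\,dr_1\cdots dr_m .
\]
Substituting $s_j=r_j^2$ turns this into $2^{-m}$ times a Dirichlet integral over the simplex $\{s_j>0,\ \sum_j s_j\le 1\}$ with exponents $b_j/2-1$. The classical Dirichlet formula evaluates it as $\prod_j\Gamma(b_j/2)\big/\Gamma\bigl(B/2+1\bigr)$, where
\[
B=\sum_j b_j=mQ-\frac{\alpha}{q}-\frac{Q}{q}.
\]
Writing $\Gamma(B/2+1)=\frac{B}{2}\Gamma(B/2)$ and noting $\frac{2}{B}=\frac{2q}{mqQ-\alpha-Q}$, this produces the factor $2^{1-m}\frac{q}{mqQ-\alpha-Q}\,\frac{\prod_j\Gamma(b_j/2)}{\Gamma(B/2)}$. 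Every $b_j>0$, so the integral is finite and Theorem \ref{thm3.1} applies.

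\emph{Main obstacle.} The analysis is routine once Theorem \ref{thm3.1} is available; the real difficulty is bookkeeping of the constants. First, the meaning of $|(y_1,\dots,y_m)|_h$ must be fixed so that the radial reduction and the Dirichlet integral are legitimate; I would check that the Euclidean combination of the individual Heisenberg norms reproduces the stated $\Gamma(\cdot/2)$ structure. Second, the power of $\omega_Q$ needs care. My computation gives the prefactor $\omega_Q^m$, whereas the statement displays $\omega_Q$. I would reconcile the two by rechecking the polar-coordinate normalisation used in (\ref{3.4}), and otherwise record the constant as $\omega_Q^m$.
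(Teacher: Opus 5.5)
Your proof is correct. The reduction to Theorem \ref{thm3.1} with $K=\chi_{\{|(z_1,\dots,z_m)|_h\le 1\}}$ matches the paper, and so does the passage to $\omega_Q^m\int_{\{r_j>0,\ \sum_j r_j^2\le 1\}}\prod_j r_j^{b_j-1}\,dr$. This includes your reading $|(y_1,\dots,y_m)|_h=(\sum_j|y_j|_h^2)^{1/2}$, which the paper uses implicitly.

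You differ from the paper in how this integral is evaluated. The paper's route has four steps:
\begin{enumerate}
\item It introduces $m$-dimensional spherical coordinates and splits off the radial factor $1/\sum_j b_j$.
\item It substitutes $t_j=\sin\varphi_j$ and then $s_j=t_j^2$ in each angular integral.
\item This turns each angular integral into a beta function with parameters $\tfrac12\sum_{i>j}b_i$ and $\tfrac12 b_j$.
\item Using $B(a,b)=\Gamma(a)\Gamma(b)/\Gamma(a+b)$, the product of these beta functions telescopes to $\prod_j\Gamma(b_j/2)/\Gamma(\tfrac12\sum_j b_j)$.
\end{enumerate}

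You instead substitute $s_j=r_j^2$ and apply the Dirichlet simplex integral $\int_{\{s_j>0,\ \sum_j s_j\le1\}}\prod_j s_j^{c_j-1}\,ds=\prod_j\Gamma(c_j)/\Gamma(1+\sum_j c_j)$. This gives the result in one step, and $\Gamma(B/2+1)=\tfrac{B}{2}\Gamma(B/2)$ supplies the factor $q/(mqQ-\alpha-Q)$. Your route is shorter and avoids the Jacobian and angular-range bookkeeping of spherical coordinates. The paper's route needs only the beta–gamma relation, and is in effect a proof of the Dirichlet formula you quote.

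On the constant: the paper's own proof ends with $C_1^h=\frac{q\,\omega_Q^m 2^{1-m}}{mqQ-\alpha-Q}\cdot\frac{\prod_j\Gamma(b_j/2)}{\Gamma(B/2)}$. So your $\omega_Q^m$ is correct, and the single $\omega_Q$ in the displayed statement is a misprint.
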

\begin{cor}\label{cor3.2}
	Assume that the real paramenters $q$, $q_j$, $\alpha$, and $\alpha_j$ with $j=1,2,...,m$ are the same as in Theorem \ref{thm3.1}, $f_j$ is a radial function in $L^{q_{\boldsymbol{j}}}(\mathbb{H} ^n,\left| x \right|_{h}^{\frac{q_{\boldsymbol{j}}\alpha _{\boldsymbol{j}}}{q}})$. Assume also that $-\frac{\alpha _j}{q}-\frac{Q}{q_j}+Q>0$ and $-\frac{\alpha}{q}-\frac{Q}{q}<0$. Then
	\begin{equation}
		\begin{aligned}
			\left\| \mathcal{H} _{2}^{h} \right\| _{L^{q_1}(\mathbb{H} ^n,\left| x \right|_{h}^{\frac{q_1\alpha _1}{q}})\times \cdot \cdot \cdot \times L^{q_m}(\mathbb{H} ^n,\left| x \right|_{h}^{\frac{q_m\alpha _m}{q}})\rightarrow L^q(\mathbb{H} ^n,\left| x \right|_{h}^{\alpha})}=\frac{q\omega _{Q}^{m}( mQ-\frac{\alpha}{q}-\frac{Q}{q})}{\left( \alpha +Q \right) \prod_{j=1}^m{( Q-\frac{\alpha _j}{q}-\frac{Q}{q_j} )}}.
		\end{aligned}
	\end{equation}
\end{cor}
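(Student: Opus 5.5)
The plan is to obtain Corollary \ref{cor3.2} directly from Theorem \ref{thm3.1} by writing $\mathcal{H}_2^h$ in the form (\ref{3.2}) with an explicit radial kernel, and then evaluating the constant $C^h$ in (\ref{3.1}) for that kernel in polar coordinates.

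\textbf{Step 1: the kernel.} In the definition of $\mathcal{H}_2^h$, substitute $y_j=\delta_{|x|_h}z_j$ for each $j$, so that $dy_j=|x|_h^{Q}\,dz_j$ and $|y_j|_h^Q=|x|_h^Q|z_j|_h^Q$. Then
\begin{align*}
\max\bigl(|x|_h^Q,|y_1|_h^Q,\dots,|y_m|_h^Q\bigr)^m=|x|_h^{mQ}\max\bigl(1,|z_1|_h^Q,\dots,|z_m|_h^Q\bigr)^m .
\end{align*}
The factor $|x|_h^{mQ}$ cancels against the Jacobian $|x|_h^{mQ}$. Hence $\mathcal{H}_2^h=\mathcal{H}^h$ with
\begin{align*}
K(z_1,\dots,z_m)=\max\bigl(1,|z_1|_h^Q,\dots,|z_m|_h^Q\bigr)^{-m}.
\end{align*}
This kernel is positive, measurable and radial in each variable, so Theorem \ref{thm3.1} applies once $C^h<\infty$. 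The operator norm then equals $C^h$.

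\textbf{Step 2: reduction to one-dimensional integrals.} Set
\begin{align*}
a_j=Q-\frac{\alpha_j}{q}-\frac{Q}{q_j}>0,\qquad A=\sum_{j=1}^m a_j=mQ-\frac{\alpha}{q}-\frac{Q}{q},
\end{align*}
where the formula for $A$ uses $\frac1q=\sum_j\frac1{q_j}$ and $\alpha=\sum_j\alpha_j$. By (\ref{3.4}),
\begin{align*}
C^h=\omega_Q^m\int_{(0,\infty)^m}\max(1,r_1,\dots,r_m)^{-mQ}\prod_{j=1}^m r_j^{a_j-1}\,dr_1\cdots dr_m .
\end{align*}
I will split $(0,\infty)^m$ into the cube $[0,1]^m$ and the regions $E_i$ already introduced in the proof of Theorem \ref{thm3.1}, on which $r_i=\max_j r_j\geqslant 1$. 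These pieces overlap only in null sets.

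\textbf{Step 3: evaluation on each piece.} On $E_i$, integrate out each $r_j$ with $j\neq i$ over $(0,r_i)$; this gives $r_i^{a_j}/a_j$ and uses $a_j>0$. What remains is
\begin{align*}
\Bigl(\prod_{j\ne i}a_j^{-1}\Bigr)\int_1^\infty s^{A-mQ-1}\,ds=\Bigl(\prod_{j\ne i}a_j^{-1}\Bigr)\frac{q}{\alpha+Q}.
\end{align*}
Convergence at infinity uses $mQ-A=\frac{\alpha+Q}{q}>0$, which is exactly the hypothesis $-\frac{\alpha}{q}-\frac{Q}{q}<0$. Summing over $i$ gives
\begin{align*}
\frac{q}{\alpha+Q}\sum_i\prod_{j\neq i}a_j^{-1}=\frac{q\,A}{(\alpha+Q)\prod_j a_j}.
\end{align*}
Multiplying by $\omega_Q^m$ produces precisely the constant stated in Corollary \ref{cor3.2}, namely
\begin{align*}
\frac{q\,\omega_Q^{m}\bigl(mQ-\frac{\alpha}{q}-\frac{Q}{q}\bigr)}{(\alpha+Q)\prod_{j=1}^m\bigl(Q-\frac{\alpha_j}{q}-\frac{Q}{q_j}\bigr)}.
\end{align*}

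\textbf{Main obstacle: the cube term.} The step I expect to be the real difficulty is the cube $[0,1]^m$. There the kernel is identically $1$, and it contributes $\omega_Q^m\prod_j a_j^{-1}$. Adding this to the $E_i$ contributions gives
\begin{align*}
\omega_Q^m\prod_j a_j^{-1}\Bigl(1+\frac{A}{mQ-A}\Bigr)=\frac{mQ\,\omega_Q^m}{(mQ-A)\prod_j a_j},
\end{align*}
which is \emph{not} the stated constant. I have not found a way to make the cube term vanish. The first thing to try is to check the normalization of $K$ in Step 1 against (\ref{3.2}) and (\ref{3.4}), looking for a convention under which the max is taken only over the $|z_j|_h$ rather than including the constant $1$. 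If no such convention is available, the stated constant would have to be corrected to $\frac{mQ\,\omega_Q^m}{(mQ-A)\prod_j a_j}$.
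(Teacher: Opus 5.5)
\textbf{Your computation is correct and follows the paper's route; the corollary as printed is false.} You use the same kernel $\max(1,|z_1|_h^Q,\dots,|z_m|_h^Q)^{-m}$ as the paper. You split into the same pieces: the cube $E_0$ and the regions $E_1,\dots,E_m$. With your $a_j$ and $A$, you also get the same value on each piece:
\[
K_0=\omega_Q^m\Big/\prod_j a_j,\qquad K_i=q\,\omega_Q^m\Big/\Bigl((\alpha+Q)\prod_{j\ne i}a_j\Bigr).
\]
What you call the main obstacle is an error in the paper, not a gap in your argument. The paper's last line asserts that $K_0+K_1+\sum_{i=2}^{m-1}K_i+K_m$ equals the stated constant. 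But the stated constant is exactly $\sum_{i=1}^m K_i=\frac{q\,\omega_Q^m A}{(\alpha+Q)\prod_j a_j}$. So the cube term $K_0$ has been dropped, which matches the paper's earlier line $C_2^h=\sum_{j=1}^m K_j$, where the sum starts at $j=1$. Keeping $K_0$ and using $qA=mqQ-\alpha-Q$ gives
\[
C_2^h=\frac{\omega_Q^m}{\prod_j a_j}\Bigl(1+\frac{qA}{\alpha+Q}\Bigr)=\frac{q\,mQ\,\omega_Q^m}{(\alpha+Q)\prod_{j=1}^m\bigl(Q-\frac{\alpha_j}{q}-\frac{Q}{q_j}\bigr)}.
\]
This is your $\frac{mQ\,\omega_Q^m}{(mQ-A)\prod_j a_j}$.

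\textbf{No normalization removes the cube term.} The $1$ in the maximum is forced: it is $|x|_h^Q$ after the dilation $y_j=\delta_{|x|_h}z_j$. If you dropped it, the integral defining $C^h$ would reduce, in the variable $s=\max_j r_j$, to a positive multiple of $\int_0^\infty s^{A-mQ-1}\,ds$, which diverges.

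\textbf{An independent check that the printed constant is wrong.} Take $m=1$, $\alpha=0$, $q=q_1=p$. Your value is $\frac{\omega_Q}{Q}pp'$, while the printed one is $\frac{\omega_Q}{Q}p$. For $f\ge 0$ one has
\[
\mathcal{H}_2^hf(x)\ge |x|_h^{-Q}\int_{|y|_h<|x|_h}f(y)\,dy.
\]
Theorem~\ref{thm3.1} with kernel $\chi_{\{|z|_h\le 1\}}$ gives this Hardy operator the norm $\omega_Q\int_0^1 r^{Q/p'-1}\,dr=\frac{\omega_Q}{Q}p'$. That already exceeds $\frac{\omega_Q}{Q}p$ when $1<p<2$. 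In fact, for $m=1$, $\mathcal{H}_2^h$ is the Hardy operator plus its adjoint, and the constants add to $\frac{\omega_Q}{Q}(p'+p)=\frac{\omega_Q}{Q}pp'$. This is the analogue of the classical Hardy--Littlewood--P\'olya constant $pp'$. The printed value is just the adjoint piece, i.e.\ the $E_1$ term.

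Your argument therefore proves the corrected corollary, with constant $\frac{q\,mQ\,\omega_Q^m}{(\alpha+Q)\prod_j a_j}$. The statement as printed cannot be proved.
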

\begin{cor}\label{cor3.3}
	Assume that the real paramenters $q$, $q_j$, $\alpha$, and $\alpha_j$ with $j=1,2,...,m$ are the same as in Theorem \ref{thm3.1}, and $f_j$ is a radial function in $L^{q_{\boldsymbol{j}}}(\mathbb{H} ^n,\left| x \right|_{h}^{\frac{q_{\boldsymbol{j}}\alpha _{\boldsymbol{j}}}{q}})$. Assume also that $-\frac{\alpha _j}{q}-\frac{Q}{q_j}+Q>0$ and $-\frac{\alpha}{q}-\frac{Q}{q}<0$. Then
	\begin{equation}
		\begin{aligned}
			\left\| \mathcal{H} _{3}^{h} \right\| _{L^{q_1}(\mathbb{H} ^n,\left| x \right|_{h}^{\frac{q_1\alpha _1}{q}})\times \cdot \cdot \cdot \times L^{q_m}(\mathbb{H} ^n,\left| x \right|_{h}^{\frac{q_m\alpha _m}{q}})\rightarrow L^q(\mathbb{H} ^n,\left| x \right|_{h}^{\alpha})}=\frac{\omega _{Q}^{m}\varGamma ( \frac{\alpha +Q}{qQ} ) \prod_{j=1}^m{\varGamma ( 1-\frac{\alpha _j}{qQ}-\frac{1}{q_j})}}{Q^m\varGamma \left( m \right)}\,\,.
		\end{aligned}
	\end{equation}
\end{cor}
\begin{proof}[Proof of Corollary \ref{cor3.1}.]\renewcommand{\qedsymbol}{}
	Next, we will use the methods in \cite{Hardy1, Hardy2}. If we take the kernel
	\begin{align}
		K\left( y_1,...,y_m \right) =\chi _{\{ \left| \left( y_1,...,y_m \right) \right|_h\leqslant 1 \}}\left( y_1,...,y_m \right)\end{align}
	in Theorems \ref{thm3.1}, by a change of variables, it is easy to verify that $\mathcal{H}^h=\mathcal{H}_1^h$, and then $\mathcal{H}_1^h$ can be denoted by 
	$$\mathcal{H}_1^h=\int_{\left| \left( y_1,...,y_m \right) \right|_h\leqslant 1}{f_1( \left| x \right|_{h}y_1 ) \cdots f_m( \left| x \right|_{h}y_m ) dy_1\cdots dy_m}.$$
	Then all things reduce to calculating
	$$C_{1}^{h}=\int_{\left| \left( y_1,...,y_m \right) \right|_h\leqslant 1}{\prod_{i=1}^m{\left| y_i \right|_{h}^{-\frac{\alpha _j}{q}-\frac{Q}{q_j}}}dy_1\cdots dy_m}.$$
	To calculate this integral, employing the polar coordinates $y_j=\rho _j\xi _j$, $j=1,2,...,m$, and Fubini's theorem, we obtain
	\begin{equation}\label{3.10}
		\begin{aligned}
			C_{1}^{h}
			&=\int_{\mathbb{S}}{\cdot \cdot \cdot \int_{\mathbb{S}}{\int_{\sum_{j=1}^m{\rho _{j}^{2}<1,\rho _j>0,j=1,...,m}}{\prod_{j=1}^m{\rho_{j}^{-\frac{\alpha _j}{q}-\frac{Q}{q_j}+Q-1}}}}}d\rho _1\cdots d\rho _md\sigma \left( \xi _1 \right) \cdot \cdot \cdot d\sigma \left( \xi _m \right) 
			\\
			&=\omega _{Q}^{m}\int_{\sum_{j=1}^m{\rho _{j}^{2}<1,\rho _j>0,j=1,...,m}}{\prod_{j=1}^m{\rho_{j}^{-\frac{\alpha _j}{q}-\frac{Q}{q_j}+Q-1}}}d\rho _1\cdots d\rho _m.
		\end{aligned}
	\end{equation}
	We use the $m$-dimensional spherical coordinates
	\begin{align*}
		&\rho _1=r\cos \varphi _1,
		\\
		&\rho _2=r\sin \varphi _1\cos \varphi _2,
		\\
		&\rho _3=r\sin \varphi _1\sin \varphi _2\cos \varphi _3,
		\\
		&\vdots 
		\\
		&\rho _{m-1}=r\sin \varphi _1\sin \varphi _2\cdots \sin \varphi _{m-2}\cos \varphi _{m-1},
		\\
		&\rho _m=r\sin \varphi _1\sin \varphi _2\cdots \sin \varphi _{m-2}\sin \varphi _{m-1},
	\end{align*}
	where $r\geqslant 0$ is the radial coordinate and $\varphi _j$, $j=1,2,...,m-1$, are angular coordinates, $\varphi _j\in \left[ 0,\pi \right]$, $j=1,2,...,m-2$, $\varphi _{m-1}\in \left[0,2\pi \right)$, and the known fact that the associated Jacobian is
	$$\left| J_m \right|=r^{m-1}\sin ^{m-2}\varphi _1\sin ^{m-3}\varphi _2\cdots \sin \varphi _{m-2}$$
	in (\ref{3.10}). Since $-\frac{\alpha _j}{q}-\frac{Q}{q_j}+Q>0$, we have
	\begin{align*}
		&C_{1}^{h}=\omega _{Q}^{m}\int_0^1{r^{\sum_{j=1}^m{( -\frac{\alpha _j}{q}-\frac{Q}{q_j}+Q-1 )}}}r^{m-1}\int_0^{\frac{\pi}{2}}{\int_0^{\frac{\pi}{2}}{\cdots \int_0^{\frac{\pi}{2}}{( \sin \varphi _1 ) ^{m-2}( \sin \varphi _2 ) ^{m-3}\cdots ( \sin \varphi _{m-2} ) ^1}}}
		\\
		&( \sin \varphi _1) ^{\sum_{j=2}^m{( -\frac{\alpha _j}{q}-\frac{Q}{q_j}+Q-1)}}( \sin \varphi _2) ^{\sum_{j=3}^m{( -\frac{\alpha _j}{q}-\frac{Q}{q_j}+Q-1 )}}\cdots ( \sin \varphi _{m-2}) ^{\sum_{j=m-1}^m{( -\frac{\alpha _j}{q}-\frac{Q}{q_j}+Q-1)}}( \sin \varphi _{m-1} ) ^{-\frac{\alpha _m}{q}-\frac{Q}{q_m}+Q-1}
		\\
		&( \cos \varphi _1) ^{-\frac{\alpha _1}{q}-\frac{Q}{q_1}+Q-1}\cdots ( \cos \varphi _{m-2} ) ^{-\frac{\alpha _{m-2}}{q}-\frac{Q}{q_{m-2}}+Q-1}( \cos \varphi _{m-1}) ^{-\frac{\alpha _{m-1}}{q}-\frac{Q}{q_{m-1}}+Q-1}d\varphi _1\cdots d\varphi _{m-1}dr
		\\
		&=\omega _{Q}^{m}\int_0^1{r^{\sum_{j=1}^m{( -\frac{\alpha _j}{q}-\frac{Q}{q_j}+Q) -1}}}dr
		\\
		&\quad\quad\quad\quad\quad\quad\quad\quad\times\int_0^{\frac{\pi}{2}}{\cdots \int_0^{\frac{\pi}{2}}{\prod_{j=1}^{m-1}{( \sin \varphi _j ) ^{m-( j+1) +\sum_{i=j+1}^m{( -\frac{\alpha _i}{q}-\frac{Q}{q_i}+Q-1)}}}}}( \cos \varphi _j) ^{-\frac{\alpha _j}{q}-\frac{Q}{q_j}+Q-1}d\varphi _1\cdots d\varphi _{m-1}
		\\
		&=\frac{\omega _{Q}^{m}}{\sum_{j=1}^m{( -\frac{\alpha _j}{q}-\frac{Q}{q_j}+Q )}}\int_0^{\frac{\pi}{2}}{\cdots \int_0^{\frac{\pi}{2}}{\prod_{j=1}^{m-1}{( \sin \varphi _j ) ^{Q( m-j ) -1-\sum_{i=j+1}^m{( \frac{\alpha _i}{q}+\frac{Q}{q_i} )}}}}}( \cos \varphi _j ) ^{-\frac{\alpha _j}{q}-\frac{Q}{q_j}+Q-1}d\varphi _1\cdots d\varphi _m
		\\
		&=\frac{\omega _{Q}^{m}}{mQ-\frac{\alpha +Q}{q}}\prod_{j=1}^{m-1}{\int_0^1{t_{j}^{Q( m-j) -1-\sum_{i=j+1}^m{( \frac{\alpha _i}{q}+\frac{Q}{q_i} )}}}}( 1-t_{j}^{2} ) ^{\frac{Q-2-\frac{\alpha _j}{q}-\frac{Q}{q_j}}{2}}dt_j
		\\
		&=\frac{q\omega _{Q}^{m}2^{1-m}}{mqQ-\alpha -Q}\prod_{j=1}^{m-1}{\int_0^1{s_{j}^{\frac{Q(m-j)-\sum_{i=j+1}^m{(\frac{\alpha _i}{q}+\frac{Q}{q_i})}}{2}-1}}}\left( 1-s_{j\,\,} \right) ^{\frac{Q-\frac{\alpha _j}{q}-\frac{Q}{q_j}}{2}-1}ds_j
		\\
		&=\frac{q\omega _{Q}^{m}2^{1-m}}{mqQ-\alpha -Q}\prod_{j=1}^{m-1}{B\left( \frac{Q( m-j ) -\sum_{i=j+1}^m{( \frac{\alpha _i}{q}+\frac{Q}{q_i} )}}{2},\frac{Q-\frac{\alpha _j}{q}-\frac{Q}{q_j}}{2} \right)}.
	\end{align*}
	We also use the fact $\varphi _j\in \left( 0,\pi /2 \right)$, $j=1,2,...,m-1$. From the seventh to eighth lines, we let $t_j=\sin \varphi _j$ for $j=1,2,...,m-1$. From the eighth to ninth lines, we set $s_j=t_j^2$ for $j=1,2,...,m-1$, as well as the definition of the beta function (see, e.g., \cite{Hardylunwenji}).
	
	By using the following well-known relation between Euler's beta and gamma functions:
	$$B\left( a,b \right) =\frac{\varGamma \left( a \right) \varGamma \left( b \right)}{\varGamma \left( a+b \right)},$$
	(see, for example, \cite{Hardylunwenji}), after some simple calculations, we see that the following relations hold:
	\begin{align*}
		\prod_{j=1}^{m-1}{B\left( \frac{Q\left( m-j \right) -\sum_{i=j+1}^m{( \frac{\alpha _i}{q}+\frac{Q}{q_i} )}}{2},\frac{Q-\frac{\alpha _j}{q}-\frac{Q}{q_j}}{2} \right)}
		&=\prod_{j=1}^{m-1}{\left( \frac{\varGamma \left( \frac{Q\left( m-j \right) -\sum_{i=j+1}^m{( \frac{\alpha _i}{q}+\frac{Q}{q_i})}}{2} \right) \varGamma \left( \frac{Q-\frac{\alpha _j}{q}-\frac{Q}{q_j}}{2} \right)}{\varGamma \left( \frac{Q\left( m-\left( j-1 \right) \right) -\sum_{i=j}^m{( \frac{\alpha _i}{q}+\frac{Q}{q_i})}}{2} \right)} \right)}
		\\
		&=\frac{\prod_{j=1}^m{\varGamma \left( \frac{Q-\frac{\alpha _j}{q}-\frac{Q}{q_j}}{2} \right)}}{\varGamma \left( \frac{mQ-\frac{\alpha}{q}-\frac{Q}{q}}{2} \right)}.
	\end{align*}
	Then we have
	$$C_{1}^{h}=\frac{q\omega _{Q}^{m}2^{1-m}}{mqQ-\alpha -Q}\frac{\prod_{j=1}^m{\varGamma ( ( Q-\frac{\alpha _j}{q}-\frac{Q}{q_j}) /2 )}}{\varGamma ( ( mQ-\frac{\alpha}{q}-\frac{Q}{q} ) /2)}.$$
	This finishes the proof of Corollary \ref{cor3.1}.
\end{proof}\renewcommand{\qedsymbol}{}
\begin{proof}[Proof of Corollary \ref{cor3.2}.]\renewcommand{\qedsymbol}{}
	Next, we refer to the methods in \cite{HLP}. If we take the kernel 
	\begin{align}
		K\left( y_1,...,y_m \right) =\frac{1}{[\max\mathrm{(}1,\left| y_1 \right|_{h}^{Q},...,\left| y_m \right|_{h}^{Q})]^m}
	\end{align}
	in Theorem \ref{thm2.1}, by a change of variables, we have $\mathcal{H}^p=\mathcal{H}_{2}^{h}$, and then $\mathcal{H}_{2}^{h}$ can be denoted by 
	$$\mathcal{H} _{2}^{h}=\int_{\mathbb{H} ^{mn}}{\frac{1}{[\max\mathrm{(}1,\left| y_1 \right|_{h}^{Q},...,\left| y_m \right|_{h}^{Q})]^m}f_1(\left| x \right|_hy_1)\cdots f_m(\left| x \right|_hy_m)dy_1\cdots dy_m}.$$
	Then, we reduce to calculating
	$$C_{2}^{h}=\int_{\mathbb{H} ^{mn}}{\frac{1}{[\max\mathrm{(}1,\left| y_1 \right|_{h}^{Q},...,\left| y_m \right|_{h}^{Q})]^m}\prod_{j=1}^m{| y_j |_{h}^{-\frac{\alpha _j}{q}-\frac{Q}{q_j}}}dy_1\cdots dy_m}.$$
	To calculate this integral, we divide the integral into $m$ parts. Let
	\begin{align*}
		&E_0=\left\{ \left( y_1,...,y_m \right) \in \mathbb{H} ^n\times \cdots \times \mathbb{H} ^n:\left| y_k \right|_h\leqslant 1,1\leqslant k\leqslant m \right\} ;
		\\
		&E_1=\left\{ \left( y_1,...,y_m \right) \in \mathbb{H} ^n\times \cdots \times \mathbb{H} ^n:\left| y_1 \right|_h>1,\left| y_k \right|_h\leqslant \left| y_1 \right|_h,2\leqslant k\leqslant m \right\} ;
		\\
		&E_i=\{ \left( y_1,...,y_m \right) \in \mathbb{H} ^n\times \cdots \times \mathbb{H} ^n:| y_i |_h>1,| y_j|_h<\left| y_i \right|_h,\left| y_k \right|_h\leqslant \left| y_i \right|_h,1\leqslant j<i<k\leqslant m \} ;
		\\
		&E_m=\{ \left( y_1,...,y_m \right) \in \mathbb{H} ^n\times \cdots \times \mathbb{H} ^n:\left| y_m \right|_h>1,| y_j |_h<\left| y_m \right|_h,1\leqslant j<m \}.
	\end{align*}
	It is clear that 
	$$\bigcup_{j=0}^m{E_j=\mathbb{H} ^n\times \cdots \times \mathbb{H} ^n},$$
	and $E_i\cap E_j=\varnothing \left( i\ne j \right) $.
	Let
	$$K_j:=\int_{E_j}{\frac{1}{[\max\mathrm{(}1,\left| y_1 \right|_{h}^{Q},...,\left| y_m \right|_{h}^{Q})]^m}\prod_{k=1}^m{\left| y_k \right|_{h}^{-\frac{\alpha _k}{q}-\frac{Q}{q_k}}}}dy_1\cdots dy_m,$$
	and then we have
	$$C_{2}^{h}=\sum_{j=1}^m{K_j:}=\sum_{j=1}^m{\int_{E_j}{\frac{1}{[\max\mathrm{(}1,\left| y_1 \right|_{h}^{Q},...,\left| y_m \right|_{h}^{Q})]^m}\prod_{k=1}^m{\left| y_k \right|_{h}^{-\frac{\alpha _j}{q}-\frac{Q}{q_j}}}dy_1\cdots dy_m}}.$$
	Now let us calculate $J_j$, $j=1,2,...,m$. Since $-\frac{\alpha _j}{q}-\frac{Q}{q_j}+Q>0$, using the polar coordinate transformation, we have 
	\begin{align*}
		K_0&=\int_{E_0}{\prod_{j=1}^m{\left| y_i \right|_{h}^{-\frac{\alpha _j}{q}-\frac{Q}{q_j}}}}dy_1\cdots dy_m=\prod_{j=1}^m{\int_{\left| y_i \right|_h\leqslant 1}{\left| y_i \right|_{h}^{-\frac{\alpha _j}{q}-\frac{Q}{q_j}}dy_i}}
		\\
		&=\prod_{j=1}^m{\omega _Q\int_0^1{r_{j}^{-\frac{\alpha _j}{q}-\frac{Q}{q_j}+Q-1}dr_j}=\frac{\omega _{Q}^{m}}{\prod_{j=1}^m{( Q-\frac{\alpha _j}{q}-\frac{Q}{q_j})}}}.
	\end{align*}
	For $j=1$, since $-\frac{\alpha _j}{q}-\frac{Q}{q_j}+Q>0$ and $-\frac{\alpha}{q}-\frac{Q}{q}<0$, we have
	\begin{align*}
		K_1&=\int_{E_1}{\frac{\prod_{j=1}^m{\left| y_i \right|_{h}^{-\frac{\alpha _j}{q}-\frac{Q}{q_j}}}}{[ \max( 1,\left| y_1 \right|_{h}^{Q},...\left| y_m \right|_{h}^{Q})] ^m}}dy_1\cdots dy_m=\int_{E_1}{\left| y_1 \right|_{h}^{-\frac{\alpha _1}{q}-\frac{Q}{q_1}-mQ}}\prod_{j=2}^m{| y_j |_{h}^{-\frac{\alpha _j}{q}-\frac{Q}{q_j}}}dy_2\cdots dy_mdy_1
		\\
		&=\int_{\left| y_1 \right|_h>1}{\left| y_1 \right|_{h}^{-\frac{\alpha _1}{q}-\frac{Q}{q_1}-mQ}}\prod_{j=2}^m{\int_{\left| y_j \right|_h\leqslant \left| y_1 \right|_h}{| y_j|_{h}^{-\frac{\alpha _j}{q}-\frac{Q}{q_j}}dy_j}}dy_1
		\\
		&=\int_{\left| y_1 \right|_h>1}{\left| y_1 \right|_{h}^{-\frac{\alpha _1}{q}-\frac{Q}{q_1}-mQ}}\prod_{j=2}^m{\omega _Q}\frac{\left| y_1 \right|_{h}^{Q-\frac{\alpha _j}{q}-\frac{Q}{q_j}}}{Q-\frac{\alpha _j}{q}-\frac{Q}{q_j}}dy_1
		\\
		&=\frac{\omega _{Q}^{m-1}}{\prod_{j=2}^m{( Q-\frac{\alpha _j}{q}-\frac{Q}{q_j})}}\int_{\left| y_1 \right|_h>1}{\left| y_1 \right|_{h}^{-\frac{\alpha}{q}-\frac{Q}{q}-Q}}dy_1=\frac{q\omega _{Q}^{m}}{\left( \alpha +Q \right) \prod_{j=2}^m{( Q-\frac{\alpha _j}{q}-\frac{Q}{q_j})}}.
	\end{align*}
	Similar for $i=2,...,m-1$, we have
	\begin{align*}
		K_i&=\int_{E_i}{\left| y_i \right|_{h}^{-\frac{\alpha _i}{q}-\frac{Q}{q_i}-mQ}}\prod_{j\ne i}^m{| y_j |_{h}^{-\frac{\alpha _j}{q}-\frac{Q}{q_j}}}dy_1\cdots dy_{i-1}dy_{i+1}dy_mdy_i
		\\
		&=\int_{\left| y_i \right|_h>1}{\left| y_i \right|_{h}^{-\frac{\alpha _i}{q}-\frac{Q}{q_i}-mQ}}\prod_{j=2}^{i-1}{\int_{| y_j|_h\leqslant \left| y_i \right|_h}{| y_j |_{h}^{-\frac{\alpha _j}{q}-\frac{Q}{q_j}}dy_j}}\prod_{k=i+1}^m{\int_{\left| y_k \right|_h\leqslant \left| y_i \right|_h}{\left| y_k \right|_{h}^{-\frac{\alpha _k}{q}-\frac{Q}{q_k}}dy_kdy_i}}
		\\
		&=\frac{q\omega _{Q}^{m}}{\left( \alpha +Q \right) \prod_{1\leqslant j\leqslant m,j\ne i}{( Q-\frac{\alpha _j}{q}-\frac{Q}{q_j})}}.
	\end{align*}
	When $i=m$, similar to the previous step, we show that
	$$K_m=\int_{\left| y_i \right|_h>1}{\left| y_m \right|_{h}^{-\frac{\alpha _m}{q}-\frac{Q}{q_m}-mQ}}\prod_{j=1}^{m-1}{| y_j |_{h}^{-\frac{\alpha _j}{q}-\frac{Q}{q_j}}}dy_1\cdots dy_m=\frac{q\omega _{Q}^{m}}{\left( \alpha +Q \right) \prod_{j=1}^{m-1}{( Q-\frac{\alpha _j}{q}-\frac{Q}{q_j})}}.$$
	Then, it yields that
	$$C_{h}^{2}=K_0+K_1+\sum_{i=2}^{m-1}{K_i}+K_m=\frac{q\omega _{Q}^{m}(mQ-\frac{\alpha}{q}-\frac{Q}{q})}{\left( \alpha +Q \right) \prod_{j=1}^m{(Q-\frac{\alpha _j}{q}-\frac{Q}{q_j})}}.$$
	This finishes the proof of Corollary \ref{cor3.2}.
\end{proof}\renewcommand{\qedsymbol}{}
\begin{proof}[Proof of Corollary \ref{cor3.3}.]\renewcommand{\qedsymbol}{}
	If we take the kernel 
	\begin{align}
		K\left( y_1,...,y_m \right) =\frac{1}{(1+\left| y_1 \right|_{h}^{Q}+\cdots +\left| y_m \right|_{h}^{Q})^m}
	\end{align}
	in Theorem \ref{thm2.1}, by a change of variables, we have $\mathcal{H}^p=\mathcal{H}_{3}^{h}$, and then $\mathcal{H}_{3}^{h}$ can be denoted by 
	$$\mathcal{H} _{3}^{h}=\int_{\mathbb{H} ^{mn}}{\frac{1}{(1+\left| y_1 \right|_{h}^{Q}+\cdots +\left| y_m \right|_{h}^{Q})^m}f_1(\left| x \right|_hy_1)\cdots f_m(\left| x \right|_hy_m)dy_1\cdots dy_m}.$$
	Then, we reduce to calculating
	$$C_{3}^{h}=\int_{\mathbb{H} ^{mn}}{\frac{1}{(1+\left| y_1 \right|_{h}^{Q}+\cdots +\left| y_m \right|_{h}^{Q})^m}\prod_{j=1}^m{| y_j |_{h}^{-\frac{\alpha _j}{q}-\frac{Q}{q_j}}}dy_1\cdots dy_m}.$$
	Actually, this method stems from Benyi and Oh \cite{Hilbert}, who investigated the one-dimensional case. Following their method, it is easy to find the higher-dimensional case, as well. For completeness, we give the details. Employing the polar coordinates and making a change of variables, we have
	$$C_{3}^{h}=\omega _{Q}^{m}\int_0^{\infty}{\cdots \int_0^{\infty}{\frac{1}{(1+\rho _{1}^{Q}+\cdots +\rho _{m}^{Q})^m}\prod_{j=1}^m{\rho_{j}^{-\frac{\alpha _j}{q}-\frac{Q}{q_j}+Q-1}}}}d\rho _1\cdots d\rho _m.$$
	Let $\rho _{j}^{Q}=t_j$, and we have
	$$C_{3}^{h}=\frac{\omega _{Q}^{m}}{Q^m}\int_0^{\infty}{\cdots \int_0^{\infty}{\frac{1}{\left( 1+t_1+\cdots +t_m \right) ^m}\prod_{j=1}^m{t_{j}^{\frac{-\frac{\alpha _j}{q}-\frac{Q}{q_j}+Q}{Q}-1}}}}dt_1\cdots dt_m.$$
	Let us denote the integral on the right by 
	$$\frac{Q^m}{\omega _{Q}^{m}}C_{3}^{h}=I_m\left( m,\frac{Q-\frac{\alpha _1}{q}-\frac{Q}{q_1}}{Q},...,\frac{Q-\frac{\alpha _m}{q}-\frac{Q}{q_m}}{Q} \right).$$
	By making the change of variables $t_m=\left( 1+t_1+\cdots +t_{m-1} \right) t$ and performing integration with respect to $dt$, we have the following identity:
	\begin{align*}
		&I_m\left( m,\frac{Q-\frac{\alpha _1}{q}-\frac{Q}{q_1}}{Q},...,\frac{Q-\frac{\alpha _m}{q}-\frac{Q}{q_m}}{Q} \right) 
		\\
		&=\int_0^{\infty}{\cdots \int_0^{\infty}{\frac{\left[ \left( 1+t_1+\cdots +t_{m-1} \right) t \right] ^{\frac{-\frac{\alpha _m}{q}-\frac{Q}{q_m}+Q}{Q}-1}\left( 1+t_1+\cdots +t_{m-1} \right)}{\left[ \left( 1+t_1+\cdots +t_{m-1} \right) +\left( 1+t_1+\cdots +t_{m-1} \right) t \right] ^m}}\prod_{j=1}^{m-1}{t_{j}^{\frac{-\frac{\alpha _j}{q}-\frac{Q}{q_j}+Q}{Q}-1}dt_1}}\cdots dt_{m-1}dt
		\\
		&=\int_0^{\infty}{\left( 1+t \right) ^{-m}}t^{\frac{-\frac{\alpha _m}{q}-\frac{Q}{q_m}+Q}{Q}-1}dtI_{m-1}\left( m-\frac{Q-\frac{\alpha _m}{q}-\frac{Q}{q_m}}{Q},\frac{Q-\frac{\alpha _1}{q}-\frac{Q}{q_1}}{Q},...,\frac{Q-\frac{\alpha _m}{q}-\frac{Q}{q_m}}{Q} \right).
	\end{align*}
	Observe that, if we make the change of variables $t+1=1/s$, we obtain
	$$\int_0^{\infty}{\left( 1+t \right) ^{-a-b}}t^{a-1}dt=\int_0^1{s^{a-1}\left( 1-s \right) ^{b-1}}ds=B\left( a,b \right) =\frac{\varGamma \left( a \right) \varGamma \left( b \right)}{\varGamma \left( a+b \right)}$$
	for $a,b>0$.
	According to known conditions, obviously, we have $( Q-\frac{\alpha _j}{q}-\frac{Q}{q_j}) /Q>0$. Now, we will show that $m-( Q-\frac{\alpha _m}{q}-\frac{Q}{q_m}) /Q-\cdots -( Q-\frac{\alpha _k}{q}-\frac{Q}{q_k}) /Q>0$ with $k=m,m-1,...,1$. Since $-\frac{\alpha _j}{q}-\frac{Q}{q_j}+Q>0$ and $-\frac{\alpha}{q}-\frac{Q}{q}<0$, we have
	\begin{align*}
		m-\frac{Q-\frac{\alpha _m}{q}-\frac{Q}{q_m}}{Q}-\cdots -\frac{Q-\frac{\alpha _k}{q}-\frac{Q}{q_k}}{Q}
		&=\frac{\left( k-1 \right) Q+( \frac{\alpha _m}{q}+\frac{Q}{q_m} ) +\cdots +( \frac{\alpha _k}{q}+\frac{Q}{q_k} )}{Q}
		\\
		&=\frac{\left( k-1 \right) Q+( \frac{\alpha}{q}+\frac{Q}{q}) -( \frac{\alpha _{k-1}}{q}+\frac{Q}{q_{k-1}}) -\cdots- ( \frac{\alpha _1}{q}+\frac{Q}{q_1})}{Q}
		\\
		&=\frac{( Q-\frac{\alpha _{k-1}}{q}-\frac{Q}{q_{k-1}}) +\cdots +( Q-\frac{\alpha _1}{q}-\frac{Q}{q_1}) +( \frac{\alpha}{q}+\frac{Q}{q})}{Q}>0.
	\end{align*}
	Therefore, if we recall the relationship between the beta and gamma functions, we obtain
	$$\int_0^{\infty}{\left( 1+t \right) ^{-m}}t^{( \frac{\alpha _m}{q}-\frac{Q}{q_m}+Q) /Q-1}dt=\frac{\varGamma( ( \frac{\alpha _m}{q}-\frac{Q}{q_m}+Q) /Q ) \varGamma ( m-( \frac{\alpha _m}{q}-\frac{Q}{q_m}+Q ) /Q)}{\varGamma \left( m \right)},$$
	and
	\begin{align*}
		&I_m\left( m,\frac{Q-\frac{\alpha _1}{q}-\frac{Q}{q_1}}{Q},...,\frac{Q-\frac{\alpha _m}{q}-\frac{Q}{q_m}}{Q} \right) 
		\\
		&=\frac{\varGamma \left( \frac{\frac{\alpha _m}{q}-\frac{Q}{q_m}+Q}{Q} \right) \varGamma \left( m-\frac{\frac{\alpha _m}{q}-\frac{Q}{q_m}+Q}{Q} \right)}{\varGamma \left( m \right)}I_{m-1}\left( m-\frac{Q-\frac{\alpha _m}{q}-\frac{Q}{q_m}}{Q},\frac{Q-\frac{\alpha _1}{q}-\frac{Q}{q_1}}{Q},...,\frac{Q-\frac{\alpha _{m-1}}{q}-\frac{Q}{q_{m-1}}}{Q} \right).
	\end{align*}
	By a simple induction argument, we obtain from this recurrence that
	\begin{align*}
		I\left( m,\frac{Q-\frac{\alpha _1}{q}-\frac{Q}{q_1}}{Q},...,\frac{Q-\frac{\alpha _m}{q}-\frac{Q}{q_m}}{Q} \right) 
		&=\frac{\varGamma \left( m-\frac{Q-\frac{\alpha _m}{q}-\frac{Q}{q_m}}{Q}-\cdots -\frac{Q-\frac{\alpha _1}{q}-\frac{Q}{q_1}}{Q} \right) \varGamma \left( \frac{Q-\frac{\alpha _m}{q}-\frac{Q}{q_m}}{Q} \right) \cdots \varGamma \left( \frac{Q-\frac{\alpha _1}{q}-\frac{Q}{q_1}}{Q} \right)}{\varGamma \left( m \right)}
		\\
		&=\frac{\varGamma \left( \frac{\alpha +Q}{qQ} \right) \prod_{j=1}^m{\varGamma \left( 1-\frac{\alpha _j}{qQ}-\frac{1}{q_j} \right)}}{\varGamma \left( m \right)}.
	\end{align*}
	Then
	$$C_{3}^{h}=\frac{\omega _{Q}^{m}}{Q^m}I_m\left( m,\frac{Q-\frac{\alpha _1}{q}-\frac{Q}{q_1}}{Q},...,\frac{Q-\frac{\alpha _m}{q}-\frac{Q}{q_m}}{Q} \right) =\frac{\omega _{Q}^{m}\varGamma \left( \frac{\alpha +Q}{qQ} \right) \prod_{j=1}^m{\varGamma \left( 1-\frac{\alpha _j}{qQ}-\frac{1}{q_j} \right)}}{Q^m\varGamma \left( m \right)}\,\,.$$
	This finishes the proof of Corollary \ref{cor3.3}.
\end{proof}\renewcommand{\qedsymbol}{}
\section{Further results: Sharp weighted $\boldsymbol{L}^{\boldsymbol{p}}$ estimate for the Hausdorff operator}
In this section, we will use the previous results to give the weighted $L^p$ estimates for the $m$-linear $n$-dimensional Hausdorff operator on the Heisenberg group.
\begin{cor}\label{cor5.1}
	Assume that the real paramenters $q$, $q_j$, $\alpha$, and $\alpha_j$ with $j=1,2,...,m$ are the same as in Theorem \ref{thm3.1}. A nonnegative function $\Phi$ on $\mathbb{H} ^n$ satisfies
	
	\begin{align}
		C_{\Phi}^{h}=\int_0^{\infty}{\cdots \int_0^{\infty}{\int_{\mathbb{S} ^{Q-1}}{\cdots \int_{\mathbb{S} ^{Q-1}}{\frac{\Phi \left( \delta _{r_1}y_{1}^{\prime},...,\delta _{r_m}y_{m}^{\prime} \right)}{\left| r_1 \right|_{h}^{Q}\cdots \left| r_m \right|_{h}^{Q}}\prod_{j=1}^m{r_{j}^{\frac{\alpha _j}{q}+\frac{Q}{q_j}-\frac{\varepsilon}{q_j}-Q+1}}}}}}dy_{1}^{\prime}\cdots dy_{m}^{\prime}dr_1\cdots dr_m<\infty.
	\end{align}
	Then 
	\begin{align}
		\left\| \mathcal{H} _{\Phi}^{h} \right\| _{L^{q_1}(\mathbb{H} ^n,\left| x \right|_{h}^{\frac{q_1\alpha _1}{q}})\times \cdot \cdot \cdot \times L^{q_m}(\mathbb{H} ^n,\left| x \right|_{h}^{\frac{q_m\alpha _m}{q}})\rightarrow L^q(\mathbb{H} ^n,\left| x \right|_{h}^{\alpha})}=C_{\Phi}^{h}.
	\end{align}
\end{cor}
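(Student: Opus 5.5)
The plan is to reduce Corollary \ref{cor5.1} to Theorem \ref{thm3.1} by rewriting the Hausdorff operator $\mathcal{H}_{\Phi}^{h}$ in the form (\ref{3.2}), with a suitable radial kernel. For fixed $x\neq 0$, I would substitute $y_j=\delta_{|x|_h}z_j$ in each variable, so that $dy_j=|x|_h^{Q}dz_j$ and $|y_j|_h=|x|_h|z_j|_h$. The factor $|y_1|_h^{Q}\cdots|y_m|_h^{Q}$ in the denominator then produces $|x|_h^{mQ}$, which cancels the Jacobian $|x|_h^{mQ}$, and the argument of $\Phi$ becomes $\delta_{|z_j|_h^{-1}}(\delta_{|x|_h^{-1}}x)$. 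This gives
\begin{align*}
\mathcal{H}_{\Phi}^{h}(f_1,\dots,f_m)(x)=\int_{\mathbb{H}^{mn}}\frac{\Phi(\delta_{|z_1|_h^{-1}}x',\dots,\delta_{|z_m|_h^{-1}}x')}{|z_1|_h^{Q}\cdots|z_m|_h^{Q}}\prod_{j=1}^m f_j(\delta_{|x|_h}z_j)\,dz_1\cdots dz_m,
\end{align*}
where $x'=\delta_{|x|_h^{-1}}x$ lies on the unit sphere.

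The obstacle is that this kernel still depends on the direction $x'$, whereas Theorem \ref{thm3.1} requires a kernel independent of $x$ and radial in each $z_j$. Since the $f_j$ are radial, I would handle this in one of two ways. The first is to assume, as the formula for $C^h_\Phi$ implicitly does, that $\Phi$ is radial in each slot, so that the dependence on $x'$ disappears. The second is to average the kernel over the angular parts of each $z_j$ (and over $x'$), which leaves the operator unchanged on radial inputs. Either way the result is a kernel
\begin{align*}
K(z_1,\dots,z_m)=\frac{1}{\omega_Q^{m}}\int_{\mathbb{S}^{Q-1}}\!\!\cdots\int_{\mathbb{S}^{Q-1}}\frac{\Phi(\delta_{|z_1|_h^{-1}}y_1',\dots,\delta_{|z_m|_h^{-1}}y_m')}{|z_1|_h^{Q}\cdots|z_m|_h^{Q}}\,dy_1'\cdots dy_m',
\end{align*}
possibly up to a normalisation fixed by the angular averaging. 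This $K$ is nonnegative and depends only on $|z_1|_h,\dots,|z_m|_h$, so Theorem \ref{thm3.1} applies and yields $\|\mathcal{H}_\Phi^h\|=C^h$ for this $K$, where $C^h$ is defined by (\ref{3.1}).

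Next I would identify $C^h$ with $C^h_\Phi$. Writing (\ref{3.1}) in polar coordinates $z_j=\delta_{\rho_j}\xi_j$ gives an integral over $\rho_j$ with the weights $\rho_j^{-\frac{\alpha_j}{q}-\frac{Q}{q_j}+Q-1}$. I would then substitute $r_j=1/\rho_j$: this turns $\delta_{\rho_j^{-1}}$ into $\delta_{r_j}$, and the factor $d\rho_j=r_j^{-2}dr_j$ combines with $\rho_j^{-Q}$ and the weight to give a power of $r_j$. I expect the result to match the displayed definition of $C^h_\Phi$ in the limit $\varepsilon\to0$, where the $\varepsilon$ appearing there enters only through the extremal functions $f_{j,\varepsilon}$. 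The exponent bookkeeping in this step, and checking it against the stated formula, is the part I would verify most carefully, since the exponent written in the statement may contain a typo.

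For sharpness, nothing new is needed: the extremal sequences $f_{j,\varepsilon}$ and $g_\varepsilon$ and the dominated convergence argument from the proof of Theorem \ref{thm3.1} apply verbatim, because $K\ge0$ and $C^h_\Phi<\infty$.
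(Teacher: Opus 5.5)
Your overall route (substitute $y_j=\delta_{|x|_h}z_j$, then invoke Theorem~\ref{thm3.1}) is the same as the paper's. The paper's proof is only a change of variables plus ``similar to Theorem~\ref{thm3.1}''. Your substitution is computed correctly.

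\textbf{The dependence on $x'$.} The gap is in how you dispose of the dependence on $x'=\delta_{|x|_h^{-1}}x$.

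First, the kernel you obtain is already radial in each $z_j$, since the arguments of $\Phi$ involve only $|z_j|_h$ and $x'$. So averaging over the angular parts of the $z_j$ does nothing. Your $K$, with $m$ independent directions $y_1',\dots,y_m'$, is also not what any averaging of the actual kernel produces, because there all $m$ slots of $\Phi$ carry the same direction $x'$.

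More seriously, averaging over $x'$ does \emph{not} leave the operator unchanged on radial inputs. For radial $f_j$, the value $\mathcal{H}^h_\Phi(f_1,\dots,f_m)(\delta_\rho x')$ genuinely depends on $x'$. Replacing the kernel by its $x'$-average replaces the output by its spherical mean, which in general strictly lowers the $L^q(\mathbb{H}^n,|x|_h^{\alpha})$ norm.

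One can run the Minkowski--H\"older argument of Theorem~\ref{thm3.1} ray by ray and test with the truncated powers $f_{j,\varepsilon}$. This shows that the norm is $\bigl(\frac{1}{\omega_Q}\int_{\mathbb{S}}c(x')^q\,dx'\bigr)^{1/q}$, where
\[
c(x')=\omega_Q^m\int_{(0,\infty)^m}\Phi(\delta_{r_1}x',\dots,\delta_{r_m}x')\prod_{j}r_j^{\frac{\alpha_j}{q}+\frac{Q}{q_j}-1}\,dr_1\cdots dr_m .
\]
This is an $L^q$-mean over directions. By contrast, for $m=1$ the displayed constant (with corrected exponent) is the plain mean $\frac{1}{\omega_Q}\int_{\mathbb{S}}c(x')\,dx'$. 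So for non-radial $\Phi$ the asserted identity is false in general.

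Only your first option can work. Assume $\Phi(y_1,\dots,y_m)=\phi(|y_1|_h,\dots,|y_m|_h)$. Then $K(z)=\phi(|z_1|_h^{-1},\dots,|z_m|_h^{-1})\prod_j|z_j|_h^{-Q}$ is independent of $x'$, and Theorem~\ref{thm3.1}, sharpness included, applies verbatim. The paper's displayed change of variables (with $|y_j|_h^{Q}$ in place of $|y_j|_h^{n}$) is likewise valid only in this radial setting; for general $\Phi$ it makes the same averaging error. Drop the averaging alternative and state the radial hypothesis explicitly.

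\textbf{The exponent.} The exponent bookkeeping does not come out as you expect. With this $K$, polar coordinates and $r_j=1/\rho_j$ give
\[
C^h=\omega_Q^m\int_{(0,\infty)^m}\phi(r_1,\dots,r_m)\prod_j r_j^{\frac{\alpha_j}{q}+\frac{Q}{q_j}-1}\,dr_1\cdots dr_m=\int_{\mathbb{H}^{mn}}\Phi(y_1,\dots,y_m)\prod_j|y_j|_h^{\frac{\alpha_j}{q}+\frac{Q}{q_j}-Q}\,dy_1\cdots dy_m .
\]
In the format of the statement, the factor multiplying $\Phi(\delta_{r_1}y_1',\dots,\delta_{r_m}y_m')/(r_1^Q\cdots r_m^Q)$ must therefore be $\prod_j r_j^{\frac{\alpha_j}{q}+\frac{Q}{q_j}+Q-1}$. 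The statement instead has $-Q+1$ in place of $+Q-1$, a sign slip in the polar Jacobian, as well as the stray $-\frac{\varepsilon}{q_j}$. The two therefore disagree by $\prod_j r_j^{2Q-2}$ even at $\varepsilon=0$, so nothing matches "in the limit $\varepsilon\to 0$". Prove the corollary with this corrected constant rather than hoping to match the displayed one.
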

\begin{proof}[Proof.]\renewcommand{\qedsymbol}{}
	By a change of variables, the $m$-linear $n$-dimensional Hausdorff operator become
	$$\mathcal{H} _{\Phi}^{h}=\int_{\mathbb{H} ^n}{\cdots}\int_{\mathbb{H} ^n}{\frac{\Phi \left( y_1,...,y_m \right)}{\left| y_1 \right|_{h}^{n}\cdots \left| y_m \right|_{h}^{n}}}f_1(\delta _{\left| y_1 \right|_{h}^{-1}}x)\cdots f_m(\delta _{\left| y_m \right|_{h}^{-1}}x)dy_1\cdots dy_m.$$
	We can obtain
	\begin{align*}
		&\left\| \mathcal{H} _{\Phi}^{h} \right\| _{L^{q_1}(\mathbb{H} ^n,\left| x \right|_{h}^{\frac{q_1\alpha _1}{q}})\times \cdot \cdot \cdot \times L^{q_m}(\mathbb{H} ^n,\left| x \right|_{h}^{\frac{q_m\alpha _m}{q}})\rightarrow L^q(\mathbb{H} ^n,\left| x \right|_{h}^{\alpha})}
		\\
		&=\int_0^{\infty}{\cdots \int_0^{\infty}{\int_{\mathbb{S} ^{Q-1}}{\cdots \int_{\mathbb{S} ^{Q-1}}{\frac{\Phi \left( \delta _{r_1}y_{1}^{\prime},...,\delta _{r_m}y_{m}^{\prime} \right)}{\left| r_1 \right|_{h}^{Q}\cdots \left| r_m \right|_{h}^{Q}}\prod_{j=1}^m{r_{j}^{\frac{\alpha _j}{q}+\frac{Q}{q_j}-\frac{\varepsilon}{q_j}-Q+1}}}}}}dy_{1}^{\prime}\cdots dy_{m}^{\prime}dr_1\cdots dr_m
		\\
		&=C_{\Phi}^{h}.
	\end{align*}
	This is similar to the proof of Theorem \ref{thm3.1}, so we omit the details. 
	This finishes the proof of Corollary \ref{cor5.1}.
\end{proof}\renewcommand{\qedsymbol}{}

\section{Conclusions}
First, in the setting of the Heisenberg group, the $n$-dimensional fractional Hardy operator has a sharp weak estimate from $L^p$ to $L^{q,\infty}$. The weak estimate bound is given by
\begin{align*}
	\left\| \mathcal{H}_{\alpha} \right\| _{L^p( \mathbb{H} ^n,\left| x \right|_{h}^{\beta} ) \rightarrow L^{q,\infty}( \mathbb{H} ^n,\left| x \right|_{h}^{\gamma} )}=\left( \frac{\omega _Q}{Q+\gamma} \right) ^{\frac{1}{q}}\left( \frac{\omega _Q\left( p-1 \right)}{pQ-Q-\beta} \right) ^{\frac{1}{p^{\prime}}}.
\end{align*}
Additionally, for the $L^1$ case, we have
\begin{align*}\left\| \mathcal{H}_{\alpha} \right\| _{L^1\left( \mathbb{H} ^n \right) \rightarrow L^{\left( Q+\beta \right) /\left( Q-\alpha \right) ,\infty}( \mathbb{H} ^n,\left| x \right|_{h}^{\gamma} )}=\left( \frac{\omega _Q}{Q+\beta} \right) ^{\left( Q-\alpha \right) /\left( Q+\beta \right)}.
\end{align*}

Second, we derive the sharp bounds for the $m$-linear $n$-dimensional integral operator with a kernel on weighted Lebesgue spaces: 
\begin{align*}
	\left\| \mathcal{H}^h\right\| _{L^{q_1}( \mathbb{H} ^n,\left| x \right|_{h}^{\frac{q_1\alpha _1}{q}} ) \times \cdot \cdot \cdot \times L^{q_m}( \mathbb{H} ^n,\left| x \right|_{h}^{\frac{q_m\alpha _m}{q}} ) \rightarrow L^q( \mathbb{H} ^n,\left| x \right|_{h}^{\alpha} )}=\int_{\mathbb{H} ^n}{\cdots \int_{\mathbb{H} ^n}{K\left( y_1,...,y_m \right) \prod_{i=1}^m{\left| y_i \right|_{h}^{-\frac{\alpha _j}{q}-\frac{Q}{q_j}}}}}dy_1\cdots dy_m.
\end{align*}

Finally, as an application, the sharp bounds for Hardy, Hardy-Littlewood-P\'{o}lya, and Hilbert operators on weighted Lebesgue spaces are obtained. Moreover, we also find the estimate for the Hausdorff operator on weighted $L^p$ spaces:
\begin{align*}
	&\left\| \mathcal{H} _{\Phi}^{h} \right\| _{L^{q_1}(\mathbb{H} ^n,\left| x \right|_{h}^{\frac{q_1\alpha _1}{q}})\times \cdot \cdot \cdot \times L^{q_m}(\mathbb{H} ^n,\left| x \right|_{h}^{\frac{q_m\alpha _m}{q}})\rightarrow L^q(\mathbb{H} ^n,\left| x \right|_{h}^{\alpha})}\\
	&=\int_0^{\infty}{\cdots \int_0^{\infty}{\int_{\mathbb{S} ^{Q-1}}{\cdots \int_{\mathbb{S} ^{Q-1}}{\frac{\Phi \left( \delta _{r_1}y_{1}^{\prime},...,\delta _{r_m}y_{m}^{\prime} \right)}{\left| r_1 \right|_{h}^{Q}\cdots \left| r_m \right|_{h}^{Q}}\prod_{j=1}^m{r_{j}^{\frac{\alpha _j}{q}+\frac{Q}{q_j}-\frac{\varepsilon}{q_j}-Q+1}}}}}}dy_{1}^{\prime}\cdots dy_{m}^{\prime}dr_1\cdots dr_m.
\end{align*}

\section*{Author contributions}
Tianyang He: Primarily responsible for the overall conceptual design of the paper and the proposal of core innovative ideas.
Zhiwen Liu: Mainly in charge of the derivation and proof of mathematical formulas, including theoretical analysis and the implementation of theorem proofs.
All authors have read and agree to the published version of the manuscript.

\section*{Use of Generative-AI tools declaration}
The authors declare they have not used Artificial Intelligence (AI) tools in the creation of this article.
%AI tools used:
%How were the AI tools used? 
%Where in the article is the information located?

\section*{Acknowledgments}
This work was supported by the National Natural Science Foundation of China with code 12471461 and the Fundamental Research Funds for the Central Universities. The funders had no role in the study design, data collection and analysis, decision to publish, or preparation of the manuscript.

\section*{Conflict of interest}
The authors declare that they have no conflict of interest.


\begin{thebibliography}{999}
\addtolength{\itemsep}{-0.6ex}
\bibitem{BI}
W. G. Faris, Weak Lebesgue spaces and quantum mechanical binding, {\it Duke Math. J.}, \textbf{43} (1976), 365--373. \doilink{https://doi.org/10.1215/S0012-7094-76-04332-5}

\bibitem{Hilbert} %1
A. Benyi, C. T. Oh, Best constants for certain multilinear integral operators, {\it J. Inequal. Appl.}, 2006, 28582. \doilink{https://doi.org/10.1155/jia/2006/28582}

\bibitem{HLP} %2
T. Batbold, Y. Sawano, Sharp bounds for $m$-linear Hilbert-type operators on the weighted Morrey spaces, {\it Math. Inequal. Appl.}, \textbf{20} (2017), 263--283. \doilink{https://doi.org/10.7153/mia-20-20}

\bibitem{CH}
M. Christ, L. Grafakos, Best constants for two nonconvolution inequalities, {\it P. Am. Math. Soc.}, \textbf{123} (1995), 1687--1693. \doilink{https://doi.org/10.1090/s0002-9939-1995-1239796-6}

\bibitem{chu} %3
J. Y. Chu, Z. W. Fu, Q. Y. Wu, $L^p$ and BMO bounds for weighted Hardy operators on the Heisenberg group, {\it J. Inequal. Appl.}, 2016, 1--12. \doilink{https://doi.org/10.1186/s13660-016-1222-x}

\bibitem{cou} %4
T. Coulhon, D. M$\ddot{u}$ller, J. Zienkiewicz, About Riesz transforms on the Heisenberg groups, {\it Math. Ann.}, \textbf{305} (1996), 369--379. \doilink{https://doi.org/10.1007/bf01444227}

\bibitem{Hardy1} %5
Z. W. Fu, L. Grafakos, S. Z. Lu, F. Y. Zhao, Sharp bounds for $m$-linear Hardy and Hilbert operators, {\it Houston J. Math.}, \textbf{38} (2012), 225--243.

\bibitem{GA}
G. Gao, X. Hu, C. Zhang, Sharp weak estimates for Hardy-type operators, {\it Ann. Funct. Anal.}, \textbf{7} (2016), 421--433. \doilink{https://doi.org/10.1215/20088752-3605447}

\bibitem{Hardyjieshao4} %6
G. Gao, F. Y. Zhao, Sharp weak bounds for Hausdorff operators, {\it Anal. Math.}, \textbf{41} (2015), 163--173. \doilink{https://doi.org/10.1007/s10476-015-0204-4}

\bibitem{HLPtuiguang} %7
Q. J. He, M.  Q. Wei, D. Y. Yan, Sharp bound for generalized $m$-linear $n$-dimensional Hardy-Littlewood-P\'{o}lya operator, {\it Anal. Theor. Appl.}, \textbf{37} (2021), 1--14. \doilink{https://doi.org/10.4208/ata.OA-2020-0039}

\bibitem{Hardy}
G. H. Hardy, Note on a theorem of Hilbert, {\it Math. Z.}, \textbf{6} (1920), 314--317. \doilink{https://doi.org/10.1007/bf01199965}

\bibitem{kor} %8
A. Kor\'anyi, H. Reimann, Quasiconformal mappings on the Heisenberg group, {\it Invent. Math.}, \textbf{80} (1985), 309--338. \doilink{https://doi.org/10.1007/bf01388609}

\bibitem{Hardyjieshao1} %9
S. Z. Lu, D. D. Yan, F. Y. Zhao, Sharp bounds for Hardy type operators on higher-dimensional product spaces, {\it J. Inequal. Appl.}, \textbf{2013} (2013), 1--11. \doilink{https://doi.org/10.1186/1029-242x-2013-148}

\bibitem{LU}
S. Lu, Some recent progress of $n$-dimensional Hardy operators, {\it Adv. Math. China}, \textbf{42} (2013), 737--747.

\bibitem{fractional} %10
Y. Mizuta, A. Nekvinda, T. Shimomura, Optimal estimates for the fractional Hardy operator, {\it Stud. Math.}, \textbf{227} (2015), 1--19. \doilink{https://doi.org/10.4064/sm227-1-1}

\bibitem{lunwenji2} %11
S. Semmes,  An introduction to Heisenberg groups in analysis and geometry, {\it Notices of the AMS}, \textbf{50} (2003), 173--186. 

\bibitem{Hardy2} %12
S. Stević, Note on norm of an $m$-linear integral-type operator between weighted-type spaces, {\it Adv Differ. Equ.}, \textbf{2021} (2021), 1--10. \doilink{https://doi.org/10.1186/s13662-021-03346-4}

\bibitem{tha} %13
S. Thangavelu, {\it Harmonic analysis on the Heisenberg group}, MA: Birkhauser Boston, \textbf{159} (1998).

\bibitem{p-adicHardy} %15
Q. Y. Wu, Z. W. Fu, Sharp estimates of $m$-linear $p$-adic Hardy and Hardy-Littlewood-P\'{o}lya operators,
{\it Appl. Math.}, 2011, 1--20.

\bibitem{fraction} %15
H. X. Yu, J. F. Li, Sharp weak bounds for $n$-dimensional fractional Hardy operators, {\it Front. Math. China}, \textbf{13} (2018), 449--457. \doilink{https://doi.org/10.1007/s11464-018-0685-0}

\bibitem{Hardyjieshao2} %16
F. Y. Zhao, Z. W. Fu, S. Z. Lu, Endpoint estimates for $n$-dimensional Hardy operators and their~commutators, {\it Sci. China Math.}, \textbf{55} (2012), 1977--1990. \doilink{https://doi.org/10.1007/s11425-012-4465-0}

\bibitem{Hardyjieshao3} %17
F. Y. Zhao, S. Z. Lu, The best bound for $n$-dimensional fractional Hardy operators, {\it Math. Inequal. Appl.}, \textbf{18} (2015), 233--240. \doilink{https://doi.org/10.7153/mia-18-17}

\bibitem{zha} %18
G. Zhang, Q. Li, Q. Wu, The Weighted and estimates for fractional Hausdorff operators on the Heisenberg Group, {\it J. Funct. Space.}, 2020.

\bibitem{Hardylunwenji}V. A. Zorich, O. Paniagua, {\it Mathematical analysis II}, Berlin: Springer, 2016. \doilink{https://doi.org/10.5860/choice.42-0997b}

\bibitem{Hausdorff}X. S. Zhang, M. Q. Wei, D. Y. Yan, Sharp bound of Hausdorff operators on Morrey spaces with power weights, {\it J. Univ.  Chinese Acad. Sci.}, \textbf{38} (2021), 577.

\end{thebibliography}
\end{document}